\newtheorem{theorem}{Theorem}
\newtheorem{statement}{Statement}
\newtheorem{lemma}{Lemma}
\newtheorem{assumption}{Assumption}
\newtheorem{proposition}{Proposition}
\theoremstyle{definition}
\newtheorem{definition}{Definition}
\DeclareMathOperator{\sgn}{sgn}
\DeclareMathOperator*{\argmin}{arg\,min}
\let\phi=\varphi
\def\eps{\varepsilon}
\def\id{\mathrm{id}}
\def\reals{\mathord{\mathbb{R}}}
\def\abs#1{{\mathopen{\vert} #1 \mathclose{\vert}}}
\newcommand\Abs[2][DEFAULT]%
\title[Topological degree in analysis of canards]{Topological degree in analysis of canard-type trajectories in 3-D systems}
\thanks{This research was partially supported by SFI grant 05/RFP/ENG062
and by a private bequest to Cork University Foundation.
V.A. Sobolev was supported in part by the Russian Foundation for Basic
Research, grant 07-01-00169a, by Programme 22 of Presidium of RAS and
Programme 16 of the Branch OF Physical and Technical Problems of Energetics
of RAS}
\author[A.~Pokrovskii, D.~Rachinskii, V.~Sobolev and A.~Zhezherun]{}
\subjclass{Primary: ?????; Secondary: ?????}
\keywords{Keywords}
\email{a.pokrovskii@ucc.ie}
\email{d.rachinskii@ucc.ie}
\email{hsablem@yahoo.com}
\email{a.zhezherun@mars.ucc.ie}
\begin{document}

\maketitle

\centerline{\scshape Alexei Pokrovskii}
\medskip
{\footnotesize
 \centerline{Department of Applied Mathematics}
 \centerline{University College Cork, Ireland}
}
\medskip
\centerline{\scshape Dmitrii Rachinskii}
\medskip
{\footnotesize
 \centerline{Department of Applied Mathematics}
 \centerline{University College Cork, Ireland}
}
\medskip
\centerline{\scshape Vladimir Sobolev}
\medskip
{\footnotesize
 \centerline{Department of Differential Equations and Control Theory}
 \centerline{Samara State University, Russia}
}
\medskip
\centerline{\scshape Andrew Zhezherun}
\medskip
{\footnotesize
 \centerline{Department of Civil and Environmental Engineering}
 \centerline{University College Cork, Ireland}
}
\medskip
\centerline{(Communicated by )}
\medskip

\begin{abstract}
\end{abstract}

\section{Introduction}
Topological degree \cite{Deimling, geom} is one of the principal toolboxes of
the modern theory of nonlinear dynamical systems. The range of applications of
this toolbox is rapidly growing, see, for instance, \cite{Mawhin}. In this paper
we discuss a new, to the best of our knowledge, scheme of applying
topological degree to the analysis of canard-type trajectories.

If $W \colon \reals^d \mapsto \reals^d$ is a continuous mapping,
$\Omega \subset \reals^d$ is a bounded open set, and $y \in \reals^d$
does not belong to the image $W(\partial \Omega)$ of the boundary
$\partial \Omega$ of $\Omega$, then the symbol $\deg(W, \Omega, y)$
denotes the \emph{topological degree} \cite{Deimling} of $W$ at $y$
with respect to $\Omega$.
If $0 \not\in W(\partial \Omega)$, then the integer number
$\gamma(W, \Omega) = \deg(W, \Omega, 0)$, called the \emph{rotation of
the vector field $W$ at $\partial \Omega$}, is well defined.
A detailed description of properties of the number
$\gamma(W, \Omega)$ can be found, for example, in \cite{geom}.
In particular, if $\id$ denotes the identity mapping,
$\id(x) \equiv x$, then the number $\gamma(\id - W, \Omega)$
measures the algebraic number of fixed points of the mapping $W$
in $\Omega$.

We will use this tool to investigate canard-type periodic
trajectories of singularly perturbed differential equations. Let
us recall some related terminology. Consider the slow-fast system
\begin{alignat}{1}
\dot x &= f(x, y, z), \notag \\
\dot y &= g(x, y, z), \label{e:generic-3d} \\
\eps \dot z &= h(x, y, z), \notag
\end{alignat}
where $x$, $y$, $z$ are scalar functions of time, $\eps$ is a small
positive parameter, and $f$, $g$, $h$ are scalar functions. The subset
\[
S = \left\{ (x, y, z) \in \reals^3 \colon h(x, y, z) = 0 \right\}
\]
of the phase space is called a \emph{slow surface} of the system
\eqref{e:generic-3d}: on this surface the derivative $\dot z$ of the
fast variable is zero.
Moreover, a part of $S$ where
\[
\frac{\partial h}{\partial z}(x, y, z) < 0 \qquad
\left( \frac{\partial h}{\partial z}(x, y, z) > 0 \right)
\]
is called \emph{attractive} (\emph{repulsive}, respectively). A line
$L \subset S$ which separates attractive and repulsive parts of
$S$ will be called a \emph{turning line}. In what follows, we
suppose the turning line to be smooth. Trajectories which at first
pass along, and close to, an attractive part of $S$ and then continue
for a while along the repulsive part of $S$ are called \emph{canards} or
\emph{duck-trajectories} \cite{Benoit}.

In the paper, we focus our attention on periodic canards in singularly perturbed
picewise linear systems, or more specifically, in a special case of \eqref{e:generic-3d}
with $h(x, y, z) = x + \abs{z}$. This choice has a number of reasons.
From the methodological point of view, piecewise linear systems are convenient
because they are integrable. Furthermore, such systems are used extensively
in the modelling of a wide range of physical processes and have applications
in electrical circuits \cite{Kennedy,Bokhoven,Fujisawa}, flight control \cite{Porter},
chemical processes control \cite{Ozkan} and neural subsystems with
control behavior \cite{Mayeri}. The piecewise linearity may be due to
nonlinear elements such as saturation or may result from linearization about various
operating points of a nonlinear plant. For example, the McKean model is a piecewise
linear caricature of the FitzHugh-Nagumo model \cite{McKean}.

The main mathematical reason to consider the existence of periodic canards
for this class of differential systems is that they (together with classical canards)
are mathematical explanations of the limit behaviour of periodic solutions of
a family of differential equations. However, the traditional methods of the
``chasse au canard'' are adapted for sufficiently smooth systems, and we consider
the use of topological methods in the case of Lipschitzian nonlinearities as a dire necessity
(continuous piecewise linear functions satisfy the Lipschitz condition, see, for example,
\cite{Fujisawa}).

Note that system \eqref{e:generic-3d} has a vector slow variable $(x, y)$,
and it is known that the existence conditions for canards in systems with
scalar and vector slow variables have a vital difference. In the scalar case
it is necessary to have an additional parameter in the system under consideration,
which is demonstrated in \cite{Sekikawa,SIAM} where piecewise linear systems
on the plane are studied. The canard then exists only for a small range of
values of the parameter. In the vector case such a parameter is not required,
hence the system \eqref{e:generic-3d}, under some assumptions about the
right hand side given in the next section, always has a canard.
Below we show that \eqref{e:generic-3d} also has a periodic canard
if some additional assumptions are made.

\section{Main result} \label{s:main}

In this section we formulate the main existence result for
topologically stable canard type periodic trajectories, using
a simple example.

Consider a system of ordinary differential equations:
\begin{equation} \label{e:main}
{\arraycolsep=0pt \begin{array}{rl}
\dot x &{} = f(x, y, z), \\[3pt]
\dot y &{} = g(x, y, z), \\[3pt]
\eps \dot z &{} = x + \abs{z},
\end{array} }
\end{equation}
with the additional assumptions described below.

The slow surface of the system consists of two half-planes, an
attractive half-plane $P_a$ and a repulsive half-plane $P_r$:
\begin{alignat}{1}
P_a &= \{ (x, y, x) \colon x < 0 \}, \label{e:Pa} \\
P_r &= \{ (x, y, -x) \colon x < 0 \}, \label{e:Pr}
\end{alignat}
together with the turning line
\begin{equation} \label{e:line}
L = \{ (0, y, 0) \}.
\end{equation}

We consider auxiliary equations
\begin{equation} \label{e:attr}
{\arraycolsep=0pt \begin{array}{l}
\dot x = f_a (x, y) = f (x, y, x), \\[3pt]
\dot y = g_a (x, y) = g (x, y, x),
\end{array} }
\end{equation}
and
\begin{equation} \label{e:repul}
{\arraycolsep=0pt \begin{array}{l}
\dot x = f_r (x, y) = f (x, y, -x), \\[3pt]
\dot y = g_r (x, y) = g (x, y, -x),
\end{array} }
\end{equation}
which describe the dynamics near the slow half-planes
\eqref{e:Pa} and \eqref{e:Pr} in the limit $\eps \to 0$.

\begin{assumption} \label{a:f-g-bounds}
The functions $f$ and $g$ in the right-hand side of \eqref{e:main} are
globally bounded and globally Lipschitz continuous with a Lipschitz constant
$\lambda$:
\begin{equation} \label{e:fg-assump}
\abs{f(x, y, z)}, \abs{g(x, y, z)} < M.
\end{equation}
\end{assumption}

\begin{assumption} \label{a:f-g-in-zero}
The following relationships hold:
\[
f(0, 0, 0) = 0, \quad g(0, 0, 0) > 0, \quad y \cdot f(0, y, 0) < 0.
\]
\end{assumption}

An important role is played below by the solutions of \eqref{e:attr}
and \eqref{e:repul}. Denote by $w^*_a(t) = (x^*_a(t), y^*_a(t))$
the solution of the system \eqref{e:attr}, satisfying the initial
condition $x(0) = y(0) = 0$, and by $w^*_r(t) = (x^*_r, y^*_r)$ the solution
of the system \eqref{e:repul}, satisfying the same initial condition.

Assumption \ref{a:f-g-in-zero} ensures the existence of a finite time
interval $(T_a, T_r) \ni 0$ such that
\[
x^*_a(t) < 0 \mbox{ for } T_a < t < 0, \quad
x^*_r(t) < 0 \mbox{ for } 0 < t < T_r.
\]
Assumption \ref{a:f-g-in-zero} also implies strict limitation on the
possible location of canards of system \eqref{e:main}, which should
follow closely the attractive half-plane \eqref{e:Pa} for a
certain interval $t_a < t < 0$, and then move along the repulsive
half-plane \eqref{e:Pr} for $0 < t < t_r$. Any such canard
must first follow the curve
\begin{equation} \label{e:Ga}
\Gamma_a = \{ (x^*_a(t), y^*_a(t), x^*_a(t)) \} \subset P_a
\end{equation}
for negative times, and then the curve
\begin{equation} \label{e:Gr}
\Gamma_r = \{ (x^*_r(t), y^*_r(t), -x^*_r(t)) \} \subset P_r
\end{equation}
for positive times, passing near the origin, where the two curves
meet at $t = 0$. Using standard tools, see \cite{Arnold,Mishchenko},
it is easy to see that for any time interval $[t_a, t_r] \subset (T_a, T_r)$
with $t_a < 0 < t_r$ such a canard exists for every sufficiently
small $\eps > 0$.

The question whether there exist \emph{periodic} canards is less obvious. The
above argument shows that a periodic canard should have a segment of fast
motion from a small neighborhood of some point of the curve $\Gamma_r$ to a
small neighborhood of the curve $\Gamma_a$; this fast motion is, consequently,
almost vertical (i.e., almost parallel to the $z$ axis). More precisely, if
there is a limit of periodic canards as $\eps \to 0$, then the limiting closed
curve has necessarily a vertical segment connecting $\Gamma_r$ and $\Gamma_a$.
The next assumption ensures a possibility of such vertical jumps.

\begin{assumption} \label{a:intersect}
The trajectories $w^*_a(t)$ and $w^*_r(t)$ of systems
\eqref{e:attr} and \eqref{e:repul} intersect, that is, there
exist $\tau$ and $\sigma$ such that
\[
x^*_a(\tau) = x^*_r(\sigma) = x^*, \quad y^*_a(\tau) = y^*_r(\sigma) = y^*
\]
with
\[
T_a < \tau < 0 < \sigma < T_r.
\]
\end{assumption}

\begin{assumption} \label{a:transv}
The intersection is transversal, that is
\[
A = f_a(x^*, y^*) g_r(x^*,y^*) - f_r(x^*, y^*) g_a(x^*,y^*) \not= 0.
\]
\end{assumption}

Assumptions \ref{a:intersect}--\ref{a:transv} are illustrated
on Fig.\ \ref{fig:planes}.

\begin{figure}[htb]
\begin{center}
\includegraphics*[width=4.5cm]{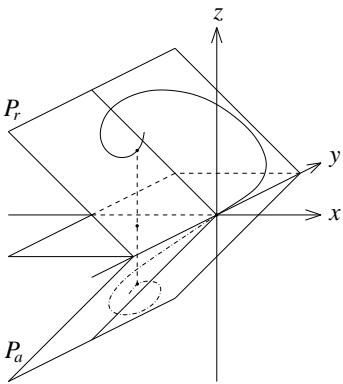}
\end{center}
\caption{Solutions $w^*_a(t)$ and $w^*_r(t)$.}
\label{fig:planes}
\end{figure}

The following theorem states the existence of periodic canard
in the system \eqref{e:main} under the Assumptions
\ref{a:f-g-bounds}--\ref{a:transv}.

\begin{theorem} \label{t:main}
For every sufficiently small $\eps > 0$ there exists a periodic solution
of system \eqref{e:main}. The minimal period $T_{\min}$ of
this solution approaches $\sigma - \tau$ as $\eps \to 0$.
\end{theorem}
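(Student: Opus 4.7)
The strategy is to reduce Theorem \ref{t:main} to a fixed-point problem for a Poincar\'e return map on a two-dimensional transversal disk, and then apply the topological-degree technique recalled in the introduction: the existence of a fixed point follows from non-vanishing of the rotation $\gamma(\id-\Pi_\eps,\Omega)$ on a suitable disk $\Omega$.

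First, I would fix a small two-dimensional disk $\Sigma$ in the phase space transverse to the expected periodic canard (for instance, a disk in a plane $\{z=c\}$ for some $c$ between $x^*$ and $-x^*$, centred at the point with $(x,y)$-coordinates $(x^*,y^*)$). For $p\in\Sigma$ in a small neighbourhood of this centre and for $\eps>0$ sufficiently small, define $\Pi_\eps(p)$ as the first return of the trajectory of \eqref{e:main} through $p$ to $\Sigma$. The return decomposes into five well-separated phases: a fast attraction from $\Sigma$ onto an $O(\eps)$-neighbourhood of the slow manifold near $\Gamma_a$; slow drift along $\Gamma_a$ from a neighbourhood of $(x^*,y^*,x^*)$ to the turning line $L$; the canard transition across $L$; slow drift along $\Gamma_r$ up to a neighbourhood of $(x^*,y^*,-x^*)$; and a fast vertical jump in $z$ back across $\Sigma$. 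Assumption \ref{a:f-g-bounds} provides global Lipschitz control that prevents blow-up, Assumption \ref{a:f-g-in-zero} selects the canard direction through the origin, and Assumptions \ref{a:intersect}--\ref{a:transv} place the projections of $\Gamma_a$ and $\Gamma_r$ into transversal intersection at $(x^*,y^*)$, guaranteeing the fast jump lands in a neighbourhood of the starting fibre.

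Second, I would verify that $\Pi_\eps$ is well-defined and continuous on a small open disk $\Omega\subset\Sigma$ around the ideal landing point $p^*$, using classical slow-manifold contraction for the attractive phase and \emph{explicit} integration of the piecewise-linear fast equation $\eps\dot z=x+\abs{z}$ (with $(x,y)$ frozen on the canard/jump time-scale) for the remaining phases. Once continuity is established, the goal is to check that $0\notin(\id-\Pi_\eps)(\partial\Omega)$ for all sufficiently small $\eps$, and then to compute $\gamma(\id-\Pi_\eps,\Omega)$ by homotopy invariance, deforming it to the rotation of a limiting affine map obtained as $\eps\to 0$. A direct calculation of the Jacobian of this limiting map at $p^*$, using the integrated formulas along $\Gamma_a$ and $\Gamma_r$, produces a determinant proportional to
\[
A=f_a(x^*,y^*)g_r(x^*,y^*)-f_r(x^*,y^*)g_a(x^*,y^*),
\]
which is nonzero by Assumption \ref{a:transv}. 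Hence the rotation equals $\pm 1$, so $\Pi_\eps$ admits a fixed point in $\Omega$, giving a periodic solution of \eqref{e:main}. The period is the sum of the five phase durations, which is $(0-\tau)+(\sigma-0)+O(\eps\abs{\ln\eps})\to\sigma-\tau$; minimality follows from uniform positive lower bounds on the two slow phases.

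The main obstacle is the quantitative control of the canard passage and the subsequent fast jump. Two trajectories starting $O(\eps)$ apart on $\Sigma$ may leave the repulsive sheet $\Gamma_r$ at $O(1)$-different points, because the canard mechanism amplifies differences exponentially along $P_r$; extracting a clean limiting map on $\Omega$ while simultaneously keeping $\id-\Pi_\eps$ away from zero on $\partial\Omega$ uniformly in $\eps$ requires careful matched asymptotics near $L$. The piecewise-linear form $h(x,y,z)=x+\abs{z}$ is precisely what makes these estimates explicit and allows the transversality constant $A$ to appear non-trivially in the rotation computation.
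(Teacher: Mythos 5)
Your overall strategy — reduce to a fixed point of a two-dimensional map and count it with topological degree, with Assumption \ref{a:transv} entering through a determinant proportional to $A$ — matches the paper's, but the central device is different and your version has a genuine gap that you name but underestimate.

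You propose to use the actual first-return (Poincar\'e) map $\Pi_\eps$ on a fixed-size transversal disk $\Omega\subset\Sigma$, and then compute $\gamma(\id-\Pi_\eps,\Omega)$ by homotopy to a limiting affine map. The problem is that $\Pi_\eps$ is simply not well-defined on any $\eps$-independent disk $\Omega$. Because the repulsive sheet amplifies perturbations exponentially in $1/\eps$, the set of initial points on $\Sigma$ whose trajectory produces a canard that re-enters a neighbourhood of $\Sigma$ shrinks to a measure-zero object as $\eps\to 0$; most points of $\Omega$ give trajectories that either jump off the repulsive half-plane immediately and never come back (the ``destabilizing'' case, where in fact $z_\eps(t)>0$ for all subsequent $t\le T_r$, see the paper's Proposition \ref{p:unstable}), or never leave the attractive half-plane at all on the relevant time scale (the ``stabilizing'' case, Proposition \ref{p:stable}). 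So there is no fixed $\Omega$ on which the return is defined and continuous, and the statement ``$0\notin(\id-\Pi_\eps)(\partial\Omega)$ uniformly in $\eps$'' cannot even be posed. You flag this as requiring ``careful matched asymptotics near $L$,'' but matched asymptotics will not produce a return map on a fixed disk; the non-return is qualitative, not a quantitative estimate to refine.

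The paper circumvents exactly this by \emph{not} working with the true Poincar\'e map. It introduces coordinates $(u,v)$ adapted to the transversal intersection of $\Gamma_a$ and $\Gamma_r$, takes a parallelogram $\Pi(\alpha)$ of fixed size in those coordinates, and defines a \emph{modified} operator $W_\eps$ that agrees with a section of the flow only when the candidate return time $s_\eps$ is within $\alpha$ of $\sigma$ (Case 1), interpolates when $s_\eps$ is moderately off (Case 2), and is capped to the constant values $w^*_r(\sigma\pm 2\alpha)$ when the trajectory is clearly destabilizing or stabilizing (Cases 3--4). This makes $W_\eps$ globally defined and continuous on all of $\Pi(\alpha)$, sends the two parallelogram sides $Q_\mp$ to two distinct constants, and forces $u(W_\eps)\to 0$, so $\id-W_\eps$ is homotopic to $(u,v)\mapsto(u,5v)$ and has degree $\sgn A$ on $\partial\Pi(\alpha)$. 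A separate lemma (Lemma \ref{l:fixed-point}) then shows any fixed point must fall in Case 1, hence corresponds to a genuine periodic orbit. This cutoff-and-glue construction is not an implementation detail — it is the idea that makes the degree argument run despite the exponential canard sensitivity — and your proposal is missing it.

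Two smaller points. First, your period estimate $(0-\tau)+(\sigma-0)+O(\eps|\ln\eps|)$ is the right order of magnitude, but the actual argument in the paper identifies the period as $s_\eps-\tau$ and shows $s_\eps$ is $\eps$-close to $\sigma$ via the $u,v$-smallness of the fixed point (Propositions \ref{p:bar-t} and \ref{p:u-v-small}); you would need an analogous quantitative statement. Second, placing $\Sigma$ at a level $\{z=c\}$ with $c\ne 0$ adds a bookkeeping burden (the fast jump crosses $z=0$ anyway and the structure of $h=x+\abs{z}$ is symmetric there); the paper's choice $z=0$ is the natural transversal for this problem.
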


It will also be shown that the periodic canard passes through a small
neighbourhood of the point $(x^*, y^*, 0)$, with the diameter of
this neighbourhood going to zero as $\eps$ goes to zero.

Below we discuss the main ideas behind the proof of this
theorem. The proof itself, which is divided into several
technical propositions and lemmas, will be provided in the next section.

Denote by
\[
w_a(t, t_0, x_0, y_0) = (x_a(t, t_0, x_0, y_0), y_a(t, t_0, x_0, y_0))
\]
the solution of system \eqref{e:attr} with the initial condition
\[
x(t_0) = x_0, \quad y(t_0) = y_0.
\]
Similarly, denote by
\[
w_r(t, t_0, x_0, y_0) = (x_r(t, t_0, x_0, y_0), y_r(t, t_0, x_0, y_0))
\]
the solution of system \eqref{e:repul} with the same initial condition
\[
x(t_0) = x_0, \quad y(t_0) = y_0.
\]

Consider a small vicinity of the point $(x^*, y^*)$. Since
the intersection between $\Gamma_a$ and $\Gamma_r$ is transversal
according to Assumption \ref{a:transv}, the numbers $u(x, y)$
and $v(x, y)$ may be defined in this vicinity by
\[
w_a(u(x, y), 0, x, y) \in \Gamma_r, \quad
w_r(v(x, y), 0, x, y) \in \Gamma_a.
\]

\begin{figure}[htb]
\begin{center}
\includegraphics*[width=5.5cm]{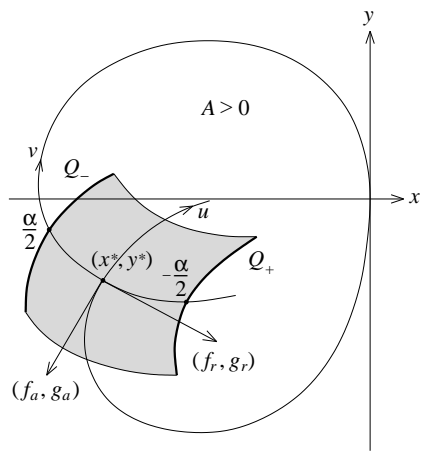}
\end{center}
\caption{The set $\Pi(\alpha)$ on the plane $(x, y)$.}
\label{fig:pi-rect}
\end{figure}

Using the coordinates $u(x, y)$ and $v(x, y)$ we introduce
for a sufficiently small $\alpha > 0$ a `parallelogram' set $\Pi(\alpha)$,
illustrated on Fig.\ \ref{fig:pi-rect}:
\begin{equation} \label{e:par}
\Pi(\alpha) = \{ x, y \colon \abs{u(x,y)}, \abs{v(x, y)} < \alpha / 2 \}.
\end{equation}
Also introduce the notation for the two `sides' of this parallelogram:
\begin{alignat*}{1}
Q_- &{} = \{ x, y \colon v(x, y) = +\alpha /2 \sgn A, \abs{u(x, y)} \le \alpha / 2 \}, \\
Q_+ &{} = \{ x, y \colon v(x, y) = -\alpha /2 \sgn A, \abs{u(x, y)} \le \alpha
/ 2 \}.
\end{alignat*}

Denote by
\begin{equation} \label{e:sol-eps}
w_\eps(t, x_0, y_0) = (x_\eps(t, x_0, y_0), y_\eps(t, x_0, y_0), z_\eps(t, x_0, y_0))
\end{equation}
the solution of the system \eqref{e:main} with the initial condition
\[
x(\tau) = x_0, \quad y(\tau) = y_0, \quad z(\tau) = 0,
\]
with $(x_0, y_0) \in \Pi(\alpha)$.

In our assumptions, the solution $w_\eps(t, x_0, y_0)$ first rapidly
approaches the attractive half-plane $P_a$ defined in \eqref{e:Pa},
and then follows $P_a$ until $t \approx 0$. The possible subsequent
behaviors of the solution are the following:

\begin{enumerate}
\item The solution stays close to the attractive part of the slow surface
for $t \le \delta$, where $\delta > 0$.

\item The solution begins to rapidly increase in the $z$ direction
around $t \approx 0$.

\item The solution follows the repulsive half-plane $P_r$ defined
in \eqref{e:Pr}, until a moment $t^*$. Moreover, for positive
$t < t^*$ it follows the curve $\Gamma_r$. After the moment $t^*$
the solution may begin to rapidly increase in the $z$ direction,
or it may fall down back to the attractive part of the slow surface.
\end{enumerate}

To distinguish between these cases, we define the time moment
\begin{equation} \label{e:s}
s_\eps(x_0, y_0) = \min ( \{ T_r \} \cup \{ t \colon
    \tilde{t}(x_0, y_0) + \rho \le t \le T_r \colon z_\eps(t, x_0, y_0) \le 0 \} ),
\end{equation}
where $\tilde{t} (x_0, y_0)$ is a time moment close to zero, which
is associated with the solution $w_a(t, \tau, x_0, y_0)$ and which
will be introduced in the next section, and $\rho > 0$ is a
sufficiently small number chosen together with $\alpha$.

Using the moment $s_\eps(x_0, y_0)$, we define an auxiliary mapping
$W_\eps$ of the parallelogram $\Pi(\alpha)$ into the plane $(x, y)$.
The definition is divided into the following four cases:

\smallskip
\noindent \textbf{Case 1.} If
\[
\sigma - \alpha < s_\eps(x_0, y_0) < \sigma + \alpha,
\]
then
\[
W_\eps(x_0, y_0) = (x_\eps(s_\eps, x_0, y_0), y_\eps(s_\eps, x_0, y_0)).
\]
If we identify the plane $(x, y)$ with the two-dimensional subspace
\[
P_0 = \{ (x, y, 0) \colon x, y \in \reals \}
\]
of the phase space of system \eqref{e:main}, then in Case~1 the
value $W_\eps(x_0, y_0)$ coincides with the intersection of the
trajectory \eqref{e:sol-eps} with $P_0$, as long as the
corresponding intersection time is close to $\sigma$.

\smallskip
\noindent \textbf{Case 2.} If
\[
\alpha \le \abs{s_\eps(x_0, y_0) - \sigma} < 2 \alpha,
\]
then
\[
W_\eps(x_0, y_0) = \frac{2 \alpha - \abs{s_\eps - \sigma}}{\alpha}
        (x_\eps(s_\eps, x_0, y_0), y_\eps(s_\eps, x_0, y_0))
    + \frac{\abs{s - \sigma} - \alpha}{\alpha} w^*_r(s_\eps).
\]
This means that $W_\eps(x_0, y_0)$ is a continuous convex combination of the
intersection of the trajectory \eqref{e:sol-eps} with $P_0$ and the point
$w^*_r(s_\eps)$ as long as the discrepancy $\abs{s_\eps - \sigma}$ between the
corresponding intersection moment $s_\eps$ and $\sigma$ in the the range from
$\alpha$ to $2 \alpha$. Moreover, if the discrepancy equals $\alpha$, then the
definition is consistent with Case~1; if the intersection moment equals
$\sigma \pm 2 \alpha$, then $W_\eps$ coincides with $w^*_r(\sigma \pm 2
\alpha)$.

\smallskip
\noindent \textbf{Case 3.} If
\[
s_\eps(x_0, y_0) \ge \sigma + 2 \alpha,
\]
then
\[
W_\eps(x_0, y_0) = w^*_r(\sigma + 2 \alpha).
\]

\smallskip
\noindent \textbf{Case 4.} If
\[
s_\eps(x_0, y_0) \le \sigma - 2 \alpha,
\]
then
\[
W_\eps(x_0, y_0) = w^*_r(\sigma - 2 \alpha).
\]

\smallskip
For small $\eps$, $\alpha$ and $\rho$ the time moment $s_\eps(x_0, y_0)$
depends continuously on $(x_0, y_0)$, therefore the operator
$W_\eps(x_0, y_0)$ is also continuous with respect to $(x_0, y_0)$.
We will show that if $(x_0, y_0)$ is a fixed point of $W_\eps$, then
Case 1 takes place, so the triple $(x_0, y_0, 0)$ defines a periodic
solution of \eqref{e:main} with the period $s_\eps - \tau$ close
to $\sigma - \tau$.

We also prove that the images of the parallelogram sides $Q_-$ and $Q_+$
are the points $w^*_r(\sigma + 2 \alpha)$ and $w^*_r(\sigma - 2 \alpha)$
correspondingly, and $u(W_\eps(x_0, y_0)) \to 0$ as $\eps \to 0$, thus
the operator $W_\eps$ is contracting along the $u$ coordinate, and
stretching along the $v$ coordinate. Finally, the vector field rotation
theory is used to obtain the relation
\[
\gamma(\id - W_\eps, \Pi(\alpha)) = \sgn A \not= 0,
\]
which implies the existence of a fixed point of the operator $W_\eps$
in the set $\Pi(\alpha)$, thus proving the existence of a periodic
canard.

\section{Proof of Theorem \ref{t:main}}

The proof consists of several parts. First, we formulate the
properties of the solutions $w_a(t, x_0, y_0) = w_a(t, \tau, x_0, y_0)$
of the system \eqref{e:attr} on the set $\Pi(\alpha)$. Secondly, we
consider the behavior of the solution $w_\eps(t, x_0, y_0)$
of the system \eqref{e:main} and establish the continuity of the
time moment $s_\eps(x_0, y_0)$ which was defined in \eqref{e:s} and
thus the continuity of the operator $W_\eps$ on $\Pi(\alpha)$.
Finally, we calculate the rotation of the vector field
$\id - W_\eps$ on $\Pi(\alpha)$.

We will need the following auxiliary definitions.

\begin{definition}
We say that $a$ is \emph{$\eps$-close} to $b$, if $a \to b$ as $\eps \to 0$.
$a$ is said to be \emph{$\eps$-small}, if it is $\eps$-close to 0.
\end{definition}

Consider a solution $w_a(t, x_0, y_0)$ of
the auxiliary system \eqref{e:attr} with the initial condition
$(x(\tau), y(\tau)) = (x_0, y_0) \in \Pi$.

Define the set
\[
R = \{ (x, y) \colon (x = 0 \wedge y \le 0) \vee (x \le 0 \wedge y = 0) \},
\]
which is the union of the left half of the horizontal $x$ coordinate axis and the
bottom half of the vertical $y$ coordinate axis. Consider all the time
moments ${\mathcal T}(x_0, y_0)$ when the solution $w_a(t, x_0, y_0)$
intersects the set $R$:
\[
{\mathcal T}(x_0, y_0) = \{ t \colon w_a(t, x_0, y_0) \in R \}.
\]

\begin{figure}[htb]
\begin{center}
\includegraphics*[width=3cm]{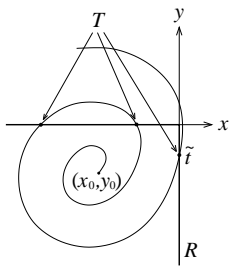}
\end{center}
\caption{The set ${\mathcal T}$ and the moment $\tilde t$.}
\label{fig:t-tilde}
\end{figure}

\begin{definition}
Let $\tilde t(x_0, y_0)$ be the moment from the set ${\mathcal T}(x_0, y_0)$
closest to zero, see Fig.\ \ref{fig:t-tilde}:
\[
\tilde t = \argmin_{t \in {\mathcal T}(x_0, y_0)} \abs{t}.
\]
Below we sometimes omit the point $(x_0, y_0)$ and write simply $\tilde{t}$,
in which case the arguments can be uniquely identified from the context.
\end{definition}

\begin{definition}
If $\tilde{t}(x_0, y_0)$ exists, and $x_a(\tilde t, x_0, y_0) = 0$
and $y_a(\tilde t, x_0, y_0) < 0$, then the solution $w_a(t, x_0, y_0)$
together with the point $(x_0, y_0)$ are called \emph{destabilizing}.
If $x_a(\tilde t, x_0, y_0) < 0$ and $y_a(\tilde t, x_0, y_0) = 0$, then
the solution $w_a(t, x_0, y_0)$ and the point $(x_0, y_0)$
are called \emph{stabilizing}.
\end{definition}

This definition emphasizes the fact that the solution
$w_\eps(t, x_0, y_0)$ of the main equation \eqref{e:main}
with a stabilizing initial condition will stay close
to the attractive half-plane $P_a$ for some time after $t > 0$,
if $\eps$ is sufficiently small; if initial condition
is destabilizing, then $z_\eps(t)$ will rapidly increase
to infinity after $t > 0$. The proof of this fact will be the
subject of several propositions, all leading to
Proposition \ref{p:s-cor}.

\begin{statement} \label{s:1}
If $\alpha$ is sufficiently small, then the moment $\tilde t$
is defined for all $(x_0, y_0) \in \Pi$ and is continuous
on $\Pi$ with respect to $(x_0, y_0)$.
\end{statement}

\begin{statement} \label{s:2}
If $\alpha$ is sufficiently small, then a point $(x_0, y_0) \in \Pi$ is
destabilizing if $A \cdot v(x_0, y_0) > 0$, and stabilizing if $A \cdot v(x_0,
y_0) < 0$. In particular, $Q_-$ is destabilizing and $Q_+$ is stabilizing.
\end{statement}

\begin{statement} \label{s:3}
Let $\alpha$ be sufficiently small. If $(x_0, y_0)$ is destabilizing,
then $x_a(t) < 0$ for $\tau \le t < \tilde t$, and if $(x_0, y_0)$ is
stabilizing, then $x_a(t) < 0$ for $\tau \le t \le \delta(\alpha)$
where $\delta(\alpha) > \max_\Pi \{ \tilde t (x_0, y_0) \} > 0$
depends only on $\alpha$.
\end{statement}

Statement \ref{s:1} follows from the continuous dependence of $w_a$ on $(x_0,
y_0)$ and from the fact that $w_a^*$ intersects the line $y = 0$
transversally.

To prove Statements \ref{s:2} and \ref{s:3}, we can consider the projection
$d(t)$ of the difference between the trajectories $w_a(t, x_0, y_0)$ and
$w_a^*(t)$ onto the normal vector for $w_a^*(t)$, $t \ge \tau$:
\[
d(t) = (x_a(t) - x_a^*(t)) g_a(x_a^*(t), y_a^*(t)) -
       (y_a(t) - y_a^*(t)) f_a(x_a^*(t), y_a^*(t)).
\]
Then the variation of $d(t)$ satisfies the following initial value problem:
\[
\dot r(t) = (\tfrac{\partial}{\partial x} f_a(x_a^*(t), y_a^*(t)) +
\tfrac{\partial}{\partial y} g_a(x_a^*(t), y_a^*(t))) r(t), \quad r(\tau) = 1,
\]
and the following equality holds for small $\Delta x_0 = x_0 - x_*$,
$\Delta y_0 = y_0 - y_*$, $d_0 = \Delta x_0 g_a(x_*, y_*) -
\Delta y_0 f_a(x_*, y_*)$:
\[
d(t) = r(t) d_0 + o(\Delta x_0) + o(\Delta y_0).
\]
Therefore, if $d_0 > 0$, and $\Delta x_0$ and $\Delta y_0$ are small,
then $d(t) > 0$ for $\tau \le t \le T_r$. Assumption \ref{a:transv}
provides that $d_0 > 0$ for $(x_0, y_0) \in Q_-$, and
$d_0 < 0$ for $(x_0, y_0) \in Q_+$.

Denote $\omega = \eps \ln \frac{T}{\eps}$ with $T = -T_a + T_r$,
$\omega > \eps$ for sufficiently small $\eps$.

Consider the solution $w_\eps(t, x_0, y_0)$ defined by \eqref{e:sol-eps}
with $(x_0, y_0) \in \Pi$.

\begin{proposition} \label{p:inv-sets}
The sets $z > -x + M \eps$ and $z > x - M \eps$ are invariant
in the sense that once a trajectory enters them, it does not leave them.
In particular, if $x_0 < 0$ and $z_0 = 0$, then
$z_\eps > x_\eps - M \eps$ for all $t \ge \tau$, and
if additionally $z_\eps(t_0) > -x_\eps + M \eps$ for some $t_0$,
then $z_\eps(t) > 0$ for all $t \ge t_0$.
\end{proposition}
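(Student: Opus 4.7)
The plan is to establish invariance of each half-space by verifying that the vector field points strictly inward on its boundary plane, and then deduce the two ``in particular'' claims by simple arithmetic. Introduce the affine functions $\phi_+(x,y,z) = z + x - M\eps$ and $\phi_-(x,y,z) = z - x + M\eps$, so that the two invariant sets are $\{\phi_+>0\}$ and $\{\phi_->0\}$. Along a solution $w_\eps$ of \eqref{e:main} one has
\[
\dot\phi_\pm \;=\; \dot z \pm \dot x \;=\; \frac{x + \abs{z}}{\eps} \pm f(x,y,z).
\]
I will show that $\dot\phi_\pm > 0$ on the entire plane $\{\phi_\pm = 0\}$; invariance then follows from a standard exit-time argument, since a first time $t_1$ at which $\phi_\pm$ returns to zero would satisfy $\dot\phi_\pm(t_1^-)\le 0$, contradicting the boundary bound.

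To verify the boundary inequality for $\phi_+$, split on the sign of $z$. If $z \ge 0$ on $\{\phi_+ = 0\}$ then $\abs{z} = z = M\eps - x$, so $x + \abs{z} = M\eps$ and $\dot z = M$. If $z < 0$ then $M\eps - x < 0$, which forces $x > M\eps$; then $\abs{z} = x - M\eps$, $x + \abs{z} = 2x - M\eps > M\eps$, and $\dot z > M$. In either case $\dot z \ge M$, so by Assumption~\ref{a:f-g-bounds} (which gives $f>-M$ strictly) one obtains $\dot\phi_+ = \dot z + f \ge M + f > 0$. The computation for $\phi_-$ is symmetric: on $\{\phi_- = 0\}$ the case $z\ge 0$ gives $x\ge M\eps$, $\abs{z}=x-M\eps$, $x+\abs{z}=2x-M\eps\ge M\eps$ and $\dot z \ge M$, while the case $z<0$ gives $x<M\eps$, $\abs{z}=M\eps-x$ and $\dot z = M$; combining with $f<M$ yields $\dot\phi_- = \dot z - f > 0$.

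The two ``in particular'' statements are then immediate. At $t = \tau$ one has $\phi_-(w_\eps(\tau)) = 0 - x_0 + M\eps > 0$ since $x_0 < 0$, so the trajectory lies in $\{\phi_->0\}$ for every $t \ge \tau$, giving $z_\eps(t) > x_\eps(t) - M\eps$. If additionally $z_\eps(t_0) > -x_\eps(t_0) + M\eps$ for some $t_0$, the trajectory also enters $\{\phi_+>0\}$ and by the first invariance remains there; adding the two strict inequalities
\[
z_\eps(t) > -x_\eps(t) + M\eps, \qquad z_\eps(t) > x_\eps(t) - M\eps,
\]
yields $2 z_\eps(t) > 0$, i.e., $z_\eps(t) > 0$, for every $t \ge t_0$. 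The only delicate point is the non-smoothness of the right-hand side at $z=0$, where the plane $\{\phi_+=0\}$ meets the kink at $x=M\eps$; however, the two one-sided limits of $\dot z$ both equal $M$ there, so the vector field is continuous across the kink and the exit-time argument goes through unchanged. I do not anticipate any further obstacle.
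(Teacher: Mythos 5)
Your proof is correct, and it takes a genuinely different route from the paper's. The paper also reduces to the scalar quantities $\phi = z + x$ and $\psi = z - x$, but it then uses the elementary bounds $\abs{z} \ge z$ and $\abs{z} \ge -z$ to obtain global linear differential inequalities $\dot\phi \ge \phi/\eps - M$ and $\dot\psi \ge -\psi/\eps - M$, and applies the comparison theorem to get explicit exponential subsolutions $\bar\phi$, $\bar\psi$ that stay above the thresholds $\pm M\eps$ once they start there. You instead verify strict inward-pointing of the vector field on the two boundary planes $\{\phi_\pm = 0\}$, using the exact value of $\abs{z}$ there (with the two-case split on the sign of $z$), and close with an exit-time argument. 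Your approach is more local and geometrically transparent (it exhibits the boundary as a strict barrier), whereas the paper's approach produces explicit exponential lower bounds of the same flavor that it reuses in Proposition~\ref{p:z-x-attr}. Both proofs rely on the strictness of the bound $\abs{f}<M$ in Assumption~\ref{a:f-g-bounds}, and both handle the kink at $z=0$ harmlessly since $x+\abs{z}$ is continuous; your remark about the one-sided limits agreeing at the kink is accurate but, as you note, not really needed beyond continuity of $\dot z$.
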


\begin{proof}
Denote $\phi(t) = z(t) + x(t)$, $\psi(t) = z(t) - x(t)$.
Then \eqref{e:main} together with
assumption \eqref{e:fg-assump} imply that
\begin{alignat*}{1}
\dot \phi = {} & \dot z + \dot x = \frac{1}{\eps} (x + \abs{z}) + f(x, y, z)
    \ge \frac{1}{\eps} (x + z) - M = \frac{1}{\eps} \phi - M, \\
\dot \psi = {} & \dot z - \dot x = \frac{1}{\eps} (x + \abs{z}) - f(x, y, z)
    \ge \frac{1}{\eps} (x - z) - M = -\frac{1}{\eps} \psi - M,
\end{alignat*}
and by the theorem on differential inequalities,
\begin{gather*}
\phi(t) \ge \bar \phi(t) = (\phi_0 - M \eps) e^{\frac{t - t_0}{\eps}} + M \eps, \\
\psi(t) \ge \bar \psi(t) = (\psi_0 + M \eps) e^{-\frac{t - t_0}{\eps}} - M \eps,
\end{gather*}
with $\phi_0 = \phi(t_0) = z(t_0) + x(t_0)$. and
$\psi_0 = \psi(t_0) = z(t_0) - x(t_0)$. If $\phi_0 > M \eps$, then
$\bar\phi(t) > M \eps$ and thus $z(t) > -x(t) + M \eps$ for all $t \ge t_0$.
Similarly, if $\psi_0 > - M \eps$, then $\bar\psi(t) > -M \eps$, therefore
$z(t) > x(t) - M \eps$ for all $t \ge t_0$.

The second part of the proposition follows from the fact that
the point $(x_0, y_0, 0)$ with $x_0 < 0$ is in the set $z > x - M \eps$, and
if both $z > x - M \eps$ and $z > -x + M \eps$ hold, then $z > 0$.
\end{proof}

\begin{proposition} \label{p:z-x-attr}
Let $\eps$ be sufficiently small. Denote
\begin{equation} \label{e:hat-t}
\hat t = \min( \{ T_r \} \cup \{ t > \tau \colon z_\eps(t) = 0 \}),
\end{equation}
and
\begin{equation} \label{e:t1}
t_1 = \tau + \eps \ln \Bigl( -\frac{x_0}{M \eps} \Bigr).
\end{equation}
Then $t_1 < \tau + \omega$, $\hat t > t_1$, and for $t_1 \le t \le \hat t$:
\[
\abs{z_\eps(t) - x_\eps(t)} < 2 M \eps.
\]
\end{proposition}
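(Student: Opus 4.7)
The plan is to control the deviation $\psi(t) := z_\eps(t) - x_\eps(t)$ of the fast variable from the attractive slow half-plane $P_a$. Since $z_\eps(\tau) = 0$ with $\dot z_\eps(\tau) = x_0/\eps < 0$, and $\hat t$ is by definition the first subsequent zero of $z_\eps$, continuity forces $z_\eps \le 0$ on $[\tau, \hat t]$. On this interval $\abs{z_\eps} = -z_\eps$, so $\psi$ satisfies the scalar linear ODE
\[
\dot\psi = \dot z_\eps - \dot x_\eps = \tfrac{1}{\eps}(x_\eps + \abs{z_\eps}) - f = -\tfrac{\psi}{\eps} - f,
\qquad \psi(\tau) = -x_0 = \abs{x_0}.
\]
The integrating factor $e^{t/\eps}$ gives the explicit solution
\[
\psi(t) = \abs{x_0}\, e^{(\tau - t)/\eps} - \int_\tau^t f(s)\, e^{(s-t)/\eps}\, ds,
\]
and $\abs{f} < M$ yields $\abs{\psi(t) - \abs{x_0}\, e^{(\tau - t)/\eps}} \le M\eps \bigl(1 - e^{(\tau - t)/\eps}\bigr) < M\eps$.

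The three assertions of the proposition then follow as corollaries. The inequality $t_1 < \tau + \omega$ reduces to $\abs{x_0} < MT$; this holds for sufficiently small $\alpha$ because $(x_0, y_0)$ lies close to $(x^*, y^*) = w^*_a(\tau)$ with $T_a < \tau < 0$, and Assumption \ref{a:f-g-bounds} together with $x^*_a(0) = 0$ gives $\abs{x^*} \le M\abs{\tau} < MT$. For $\hat t > t_1$, I would combine the formula for $\psi$ with $x_\eps(t) = x_0 + \int_\tau^t f\,ds$ to obtain
\[
z_\eps(t) = x_0\bigl(1 - e^{(\tau-t)/\eps}\bigr) + \int_\tau^t f(s)\bigl(1 - e^{(s-t)/\eps}\bigr)\, ds,
\]
whose first term is strictly negative of magnitude $\abs{x_0}\bigl(1 - e^{(\tau-t)/\eps}\bigr)$ while the second is bounded by $M(t-\tau) \le M\eps\ln(\abs{x_0}/(M\eps))$ on $[\tau, t_1]$; since $\abs{x_0}$ stays close to $\abs{x^*} > 0$ while $\eps \ln(1/\eps) \to 0$, the first term uniformly dominates on $(\tau, t_1]$ and so $z_\eps < 0$ there. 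Finally, on $[t_1, \hat t]$ the definition of $t_1$ forces $e^{(\tau - t)/\eps} \le M\eps/\abs{x_0}$, whence $\abs{x_0}\, e^{(\tau-t)/\eps} \le M\eps$, and the displayed estimate for $\psi$ gives $\abs{z_\eps - x_\eps} = \abs{\psi(t)} < 2M\eps$.

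The main obstacle is verifying $\hat t > t_1$: one must rule out an early return $z_\eps = 0$ throughout the boundary-layer window $[\tau, t_1]$ of length $O(\eps \ln(1/\eps))$ by showing that the exponentially-decaying relaxation from the initial jump $\psi(\tau) = \abs{x_0}$ dominates the linearly-accumulated forcing from $f$ uniformly, with the comparison tight enough to remain valid at the endpoint $t_1$ where the two competing contributions are closest in size.
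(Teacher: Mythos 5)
Your proof is correct and takes essentially the same route as the paper's --- both track $\psi = z_\eps - x_\eps$ and exploit the fact that while $z_\eps \le 0$ the fast equation $\eps\dot z = x + \abs{z}$ turns the evolution of $\psi$ into the scalar linear ODE $\dot\psi = -\psi/\eps - f$ --- but the mechanics differ in two noteworthy ways. You note up front, from $\dot z_\eps(\tau) = x_0/\eps < 0$ and the definition of $\hat t$, that $z_\eps \le 0$ on all of $[\tau,\hat t]$, and then a single variation-of-constants formula for $\psi$ delivers both the lower bound $\psi > -M\eps$ and the upper bound $\psi < 2M\eps$ via the $M\eps$-estimate on the forcing integral. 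The paper instead imports the lower bound from the invariant-set result (Proposition~\ref{p:inv-sets}) and obtains the upper bound from a separate comparison inequality $\dot\psi \le -\psi/\eps + M$; it also establishes $z_\eps < 0$ on $(\tau, t_1]$ by a bootstrap (``assuming $z < 0$'') rather than appealing to the definition of $\hat t$, which your approach sidesteps cleanly. One caveat in your write-up: the ``uniform domination'' step proving $\hat t > t_1$ --- comparing $\abs{x_0}(1 - e^{(\tau-t)/\eps})$ against the accumulated forcing $M(t-\tau)$ --- needs a reason why dominance at the endpoint $t_1$ propagates backward to all of $(\tau, t_1]$. This follows, for instance, from concavity of $u \mapsto \abs{x_0}(1 - e^{-u/\eps}) - Mu$ together with its vanishing at $u = 0$, or equivalently from the fact that this difference is increasing precisely on $[0, t_1-\tau]$ (the observation the paper encodes by noting $\psi^{(1)}$ is decreasing with $\psi^{(1)}(t_1) > 0$). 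You identify this as ``the main obstacle'' but leave it open, so you should close it with one of these lines.
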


\begin{proof}
According to Proposition \ref{p:inv-sets}, the solution $w_\eps(t)$ does not leave the set
$z - x > -M \eps$, because $(x_0, y_0) \in \Pi$ and thus $z_0 - x_0 > 0$. Therefore,
it suffices to prove the relation $z_\eps(t) - x_\eps(t) < 2 M \eps$ for
$t_1 \le t \le \hat t$.

First we prove that $z_\eps(t) < 0$ for $\tau < t \le t_1$, which will imply
$\hat t > t_1$.
Note that $-M T < x_0 < -M \eps$, thus $t_1 < \tau + \omega$.
Assuming that $z(t) < 0$, we get the following relations for $\psi = z - x$:
\begin{gather}
\dot \psi = \dot z - \dot x = \frac{1}{\eps} (x - z) - f(x, y, z), \notag \\
\dot \psi \ge -\frac{1}{\eps} \psi - M, \quad
    \psi_0 = z_0 - x_0 > M \eps, \label{e:psi-below} \\
\dot \psi(t) \ge (\psi_0 + M \eps) e^{-\frac{1}{\eps} (t - \tau)} - M \eps
    = \psi^{(1)}(t). \notag
\end{gather}
The function $\psi^{(1)}(t)$ is decreasing, and
\[
\psi^{(1)} (t_1) = (\psi_0 + M \eps) \frac{M \eps}{\psi_0} - M \eps > 0,
\]
therefore $\dot z_\eps = -\frac{1}{\eps} \psi \le -\frac{1}{\eps} \psi^{(1)} < 0$
for $\tau \le t \le t_1$, which proves that $z_\eps(t) < 0$ for $\tau < t \le t_1$.

Now, similarly to \eqref{e:psi-below}, we write
\begin{gather*}
\dot \psi \le -\frac{1}{\eps} \psi + M, \\
\dot \psi(t) \le (\psi_0 - M \eps) e^{-\frac{1}{\eps} (t - \tau)} + M \eps = \psi^{(2)}(t), \\
\psi^{(2)} (t_1) = (\psi_0 - M \eps) \frac{M \eps}{\psi_0} + M \eps < 2 M \eps,
\end{gather*}
thus for $t \ge t_1$:
\[
z_\eps(t) - x_\eps(t) \le \psi^{(2)} (t) < 2 M \eps.
\]
This inequality holds while $z_\eps(t)$ satisfies the equation
$\dot z = \frac{1}{\eps} (x - z)$, i.e. while $t \le \hat t$.
\end{proof}

As before, $(x_a, y_a)$ will denote the solution of the system \eqref{e:attr}
with the initial condition $x(\tau) = x_0$, $y(\tau) = y_0$.

\begin{proposition} \label{p:x-y-attr}
If $\eps$ is sufficiently small, then the following inequalities hold
for $\tau \le t \le \hat t$:
\[
\abs{x_\eps(t) - x_a(t)}, \abs{y_\eps(t) - y_a(t)} \le L \omega,
\]
where $L = 5 M e^{4 \lambda T}$.
\end{proposition}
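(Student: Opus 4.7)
The plan is to compare $w_\eps(t) = (x_\eps, y_\eps)$ with $w_a(t) = (x_a, y_a)$ by means of a Gronwall-type estimate for the combined error $E(t) = \abs{x_\eps(t) - x_a(t)} + \abs{y_\eps(t) - y_a(t)}$. The interval $[\tau, \hat t]$ needs to be split at the relaxation moment $t_1$ supplied by Proposition \ref{p:z-x-attr}: on $[\tau, t_1]$ the fast variable $z_\eps$ has not yet reached the slow manifold, so the direct Lipschitz comparison of $f(x_\eps, y_\eps, z_\eps)$ with $f(x_a, y_a, x_a)$ is ineffective; on $[t_1, \hat t]$ the bound $\abs{z_\eps - x_\eps} < 2M\eps$ from that proposition makes the comparison work.

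On the short transient $[\tau, t_1]$, since $t_1 - \tau < \omega$ and the four speeds $\abs{\dot x_\eps}, \abs{\dot y_\eps}, \abs{\dot x_a}, \abs{\dot y_a}$ are all bounded by $M$ by Assumption \ref{a:f-g-bounds}, both trajectories move at most $M\omega$ in each coordinate away from the common starting point $(x_0, y_0)$. Hence $E(t) \le 4M\omega$ for every $t \in [\tau, t_1]$, and in particular $E(t_1) \le 4M\omega$.

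On the main interval $[t_1, \hat t]$, Lipschitz continuity gives
\[
\abs{f(x_\eps, y_\eps, z_\eps) - f(x_a, y_a, x_a)} \le \lambda \bigl( \abs{x_\eps - x_a} + \abs{y_\eps - y_a} + \abs{z_\eps - x_a} \bigr),
\]
and the key device is to split
\[
\abs{z_\eps - x_a} \le \abs{z_\eps - x_\eps} + \abs{x_\eps - x_a} \le 2M\eps + \abs{x_\eps - x_a}
\]
using Proposition \ref{p:z-x-attr}. Together with the analogous bound for $g$, this yields the differential inequality $\dot E(t) \le 4\lambda E(t) + 4\lambda M \eps$ almost everywhere on $[t_1, \hat t]$. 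Integrating it with initial value $E(t_1) \le 4M\omega$ and using $\hat t - t_1 \le T$, I obtain
\[
E(t) \le (E(t_1) + M\eps)\, e^{4\lambda(t-t_1)} \le (4M\omega + M\eps)\, e^{4\lambda T} \le 5M\omega\, e^{4\lambda T} = L\omega,
\]
where the last step uses $\eps < \omega$. Since each component satisfies $\abs{x_\eps - x_a}, \abs{y_\eps - y_a} \le E$, the claimed bound follows on $[t_1, \hat t]$ and, combined with the transient estimate, on all of $[\tau, \hat t]$.

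The main obstacle avoided by this two-stage scheme is precisely the initial boundary layer: during $[\tau, t_1]$ the variable $z_\eps$ is far from $x_\eps$ and cannot be controlled by Proposition \ref{p:z-x-attr}, but fortunately this layer has length at most $\omega$, which is the very order of accuracy claimed, so the crude \emph{velocity times time} bound already suffices there and the Gronwall argument takes over from $t_1$ onward.
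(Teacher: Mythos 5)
Your proof is correct and follows essentially the same route as the paper's: bound the discrepancy on the short boundary-layer interval $[\tau, t_1]$ by the velocity bound $M$ and the interval length $\omega$, then on $[t_1, \hat t]$ use Proposition \ref{p:z-x-attr} to replace $z_\eps$ by $x_\eps$ up to an $O(\eps)$ error, apply the Lipschitz condition to get $D_R E \le 4\lambda E + 4\lambda M\eps$, and integrate. The only cosmetic difference is that you spell out the $[\tau, t_1]$ estimate as part of the stated conclusion rather than only using it as the initial value for the Gronwall step, which the paper leaves implicit.
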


\begin{proof}
The moment $t_1$ defined by \eqref{e:t1} in Proposition \ref{p:z-x-attr}
is $\omega$-close to $\tau$, thus
\[
\abs{x_\eps(t_1) - x_a(t_1)} \le \abs{x_\eps(t_1) - x_0} + \abs{x_a(t_1) - x_0}
    \le 2 M \omega, \quad
\abs{y_\eps(t_1) - y_a(t_1)} \le 2 M \omega.
\]
Denote $\psi(t) = \abs{x_\eps(t) - x_a(t)} + \abs{y_\eps(t) - y_a(t)}$, $t \ge t_1$.
Using \eqref{e:main} and \eqref{e:attr} together with Assumption \ref{a:f-g-bounds}
and Proposition \ref{p:z-x-attr}, we obtain
\begin{gather*}
D_R \abs{x_\eps - x_a} \le \abs{f(x_\eps, y_\eps, x_\eps)
    - f(x_a, y_a, x_a)} + 2 \lambda M \eps \le
    2 \lambda \abs{x_\eps - x_a} + \lambda \abs{y_\eps - y_a} + 2 \lambda M \eps, \\
D_R \abs{y_\eps - y_a} \le \abs{g(x_\eps, y_\eps, x_\eps)
    - g(x_a, y_a, x_a)} + 2 \lambda M \eps \le
    2 \lambda \abs{x_\eps - x_a} + \lambda \abs{y_\eps - y_a} + 2 \lambda M \eps,
\end{gather*}
where $D_R$ denotes the right derivative. Therefore
\[
D_R \psi(t) \le 4 \lambda \psi(t) + 4 \lambda M \eps, \quad \psi(t_1) \le 4 M \omega.
\]
and by the theorem on differential inequalities,
\[
\psi(t) \le (4 M \omega + M \eps) e^{4 \lambda (t - t_1)} - M \eps
    \le L \omega.
\]
for $t_1 \le t \le \hat t$.
\end{proof}

\begin{proposition} \label{p:unstable}
If $(x_0, y_0)$ is destabilizing and $\eps$ is sufficiently small, then the
solution $(x_\eps, y_\eps, z_\eps)$ reaches $z = 0$ at the moment $\hat t$
which is $\eps$-close to $\tilde t(x_0, y_0)$, and $z_\eps(t) > 0$
for all $t > \hat t$.
\end{proposition}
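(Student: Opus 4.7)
The plan is to split the proof into two parts: first, showing that $\hat t$ is $\eps$-close to $\tilde t$; second, showing that $z_\eps(t) > 0$ for every $t > \hat t$. A recurring ingredient will be the bound $|z_\eps(t) - x_a(t)| \le 2M\eps + L\omega$ valid on $[t_1, \hat t]$, obtained by combining Propositions \ref{p:z-x-attr} and \ref{p:x-y-attr}, together with the observation that since $(x_0, y_0)$ is destabilizing we have $y_a(\tilde t) < 0$, so Assumption \ref{a:f-g-in-zero} gives $\dot x_a(\tilde t) = f(0, y_a(\tilde t), 0) > 0$; that is, $x_a$ crosses zero transversally from below at $\tilde t$.

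For the first part, I would fix an arbitrary small $\beta > 0$, small enough that $x_a(\tilde t + \beta) > 0$ (which holds by the transversality just noted). By Statement \ref{s:3} and continuity of $x_a$, there exists $c_\beta > 0$ with $x_a(t) \le -c_\beta$ on $[\tau, \tilde t - \beta]$. If $\hat t \le \tilde t - \beta$, then $0 = z_\eps(\hat t) \le -c_\beta + 2M\eps + L\omega < 0$ for small $\eps$, a contradiction; if $\hat t \ge \tilde t + \beta$, then $z_\eps$ is negative throughout $(\tau, \hat t)$, yet $z_\eps(\tilde t + \beta) \ge x_a(\tilde t + \beta) - 2M\eps - L\omega > 0$ for small $\eps$, again a contradiction. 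Since $\beta$ is arbitrary, this proves $\hat t$ is $\eps$-close to $\tilde t$.

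For the second part, I would first observe that $z_\eps < 0$ on $(\tau, \hat t)$ together with $z_\eps(\hat t) = 0$ forces $\dot z_\eps(\hat t^-) \ge 0$; since $\eps \dot z_\eps(\hat t^-) = x_\eps(\hat t)$, this yields $x_\eps(\hat t) \ge 0$. The first part and Proposition \ref{p:x-y-attr} give $y_\eps(\hat t) \to y_a(\tilde t)$ as $\eps \to 0$, so continuity of $f$ supplies a constant $c_0 > 0$ and a neighbourhood $\mathcal N$ of $(0, y_a(\tilde t), 0)$ on which $f \ge c_0$. Set $t^* := \hat t + 2M\eps/c_0$. A Gronwall-type estimate for $\eps \dot z = x + |z|$ with $x = O(\eps)$ shows $z_\eps$ remains $O(\eps)$ on $[\hat t, t^*]$, so for $\eps$ small the trajectory stays in $\mathcal N$; hence $\dot x_\eps \ge c_0$ on $[\hat t, t^*]$ and $x_\eps(t) \ge c_0(t - \hat t) \ge 0$ there. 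Then $\dot z_\eps \ge x_\eps/\eps \ge c_0(t - \hat t)/\eps$, so integrating from $\hat t$ gives $z_\eps(t) \ge c_0(t - \hat t)^2/(2\eps) > 0$ for $t \in (\hat t, t^*]$, and in particular $z_\eps(t^*) + x_\eps(t^*) \ge 2M\eps > M\eps$.

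At this point Proposition \ref{p:inv-sets} traps the trajectory in the invariant set $\{z > -x + M\eps\}$ for all $t \ge t^*$; combined with the companion invariance $z_\eps > x_\eps - M\eps$ (valid for all $t \ge \tau$ since $z(\tau) = 0$ and $x_0 < 0$), the sum of the two strict inequalities yields $z_\eps(t) > 0$ for every $t \ge t^*$, which together with the strict positivity already established on $(\hat t, t^*]$ completes the proof. I expect the main obstacle to be the delicate borderline case $x_\eps(\hat t) = 0$: then $\dot z_\eps(\hat t) = 0$ and the trajectory does not immediately leave the $z = 0$ hyperplane, so one cannot invoke invariance at $\hat t$ directly and must instead exploit the positivity of the slow drift $\dot x_\eps \ge c_0$ to push $x_\eps$, and consequently $z_\eps$, past the $M\eps$ threshold within a fast-time interval of length $O(\eps)$.
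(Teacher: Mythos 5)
Your proof is correct and follows essentially the same strategy as the paper's: part one combines Propositions \ref{p:z-x-attr} and \ref{p:x-y-attr} with the transversal crossing of $x_a$ at $\tilde t$ to trap $\hat t$ near $\tilde t$, and part two shows the trajectory enters the invariant half-space $\{z > -x + M\eps\}$ within $O(\eps)$ time and then lets Proposition \ref{p:inv-sets} take over. The only noteworthy difference is the device used for the escape estimate: the paper works directly with $\phi = z_\eps + x_\eps$ via the differential inequality $\dot\phi \ge \frac{1}{\eps}\phi + \frac{1}{2}\tilde f$, $\phi(\hat t)\ge 0$, so that $\phi$ crosses $M\eps$ after a time $\eps\ln(4M/\tilde f)$; you instead bound $x_\eps$ from below linearly by the slow drift $\dot x_\eps \ge c_0$ and then $z_\eps$ quadratically by integrating $\dot z_\eps \ge x_\eps/\eps$, reaching $z_\eps + x_\eps > M\eps$ at $t^* = \hat t + 2M\eps/c_0$. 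Both handle the borderline case $x_\eps(\hat t) = 0$ that you flagged — you via the strict slow drift, the paper via the strict forcing term $\tilde f/2$ in the $\phi$-inequality — and the paper's one-line Gronwall estimate is somewhat more economical, but your split estimate is equally rigorous.
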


\begin{proof}
Denote $\tilde y = y(\tilde t) < 0$, $\tilde f = f (0, \tilde y, 0) > 0$,
$\tilde g = g(0, \tilde y, 0)$. Consider a small vicinity $\Omega$ of the point
$(0, \tilde y, 0)$:
\[
\Omega = \{ (x, y, z) \colon \abs{x}, \abs{z}, \abs{y - \tilde y} \le \delta \}
\]
Due to the continuity of $f$ and $g$ we can select a sufficiently small
$\delta$ such that $\tilde f / 2 < f(x, y, z) < 2 \tilde f$,
and $\abs{g(x, y, z)} < 2 \max \{ \tilde f, \abs{\tilde g} \}$
for all $(x, y, z) \in \Omega$. Denote
\[
\Delta \tilde t = \delta / ( 2 \max \{ \tilde f, \abs{\tilde g} \} ),
\quad \tilde x = x_a(\tilde t - \Delta \tilde t) < 0.
\]
Then $(x_a(t), y_a(t), x_a(t)) \in \Omega$ for
$\tilde t - \Delta \tilde t \le t \le \tilde t + \Delta \tilde t$,
$x_a(t)$ is increasing on this interval, and Statement \ref{s:3}
implies that
\begin{equation} \label{e:x-a-neg}
x_a(t) \le \tilde x \mbox { for } \tau \le t \le \tilde t - \Delta \tilde t.
\end{equation}

First we will prove that $\hat t$ is $\eps$-close to $\tilde t$.
Let $\Delta t = 4 L \omega / \tilde f < \Delta \tilde t$ for
sufficiently small $\eps$. We have
\begin{equation} \label{e:x-a-bounds}
x_a(\tilde t - \Delta t) < - 2 L \omega, \quad x_a(\tilde t + \Delta t) > 2 L \omega.
\end{equation}
Consider the interval $I = \{ t \colon t_1 \le t \le \tilde t - \Delta t \}$.
Inequalities \eqref{e:x-a-neg}, \eqref{e:x-a-bounds} together with the fact
that $x_a(t)$ is increasing for $\tilde t - \Delta \tilde t \le t \le \tilde t - \Delta t$
imply that $x_a(t) < -2 L \omega$ for $t \in I$. Applying Propositions
\ref{p:z-x-attr} and \ref{p:x-y-attr}, we obtain
\[
z_\eps(t) < x_\eps(t) + 2 M \eps < x_a(t) + L \omega + 2 M \eps < 0
    \mbox{ for } t \in I,
\]
therefore $z_\eps(t) < 0$ for $\tau < t \le \tilde t - \Delta t$.

Now suppose that $z_\eps(t) < 0$ for $\tilde t - \Delta t \le t \le \tilde t + \Delta t$.
Then Propositions \ref{p:z-x-attr} and \ref{p:x-y-attr} will hold on this interval,
and \eqref{e:x-a-bounds} will imply that
\[
z_\eps(\tilde t + \Delta t) > x_\eps(\tilde t + \Delta t) - 2 M \eps
> x_a(\tilde t + \Delta t) - L \omega - 2 M \eps > 0.
\]
This contradiction proves that $\hat t$ lies in the interval
$\tilde t - \Delta t \le \hat t \le \tilde t + \Delta t$.

Finally, we need to prove that $z_\eps(t) > 0$ for $t > \hat t$.
We have
\[
\dot z_\eps(\hat t) = \frac{1}{\eps} ( x_\eps(\hat t)
    + z_\eps(\hat t) ) \ge 0,
\]
thus $x_\eps(\hat t) \ge 0$. It will suffice to show that
the solution enters the set $z > -x + M \eps$ before the time
$\hat t + O (\eps)$ (so that the solution will still be in the
set $\Omega$ by that time, and $x_\eps(t)$ will be increasing,
which will guarantee that $z_\eps(t) \ge 0$).
Let $\phi(t) = z_\eps(t) + x_\eps(t)$, then
\[
\dot \phi(t) = \frac{1}{\eps} \phi(t) + f(x_\eps, y_\eps, z_\eps)
    \ge \frac{1}{\eps} \phi(t) + \frac{1}{2} \tilde f,
\quad \phi(\hat t) \ge 0,
\]
and by the theorem on differential inequalities,
\[
\phi(t) \ge \frac{1}{2} \tilde f \eps \left( e^{\frac{1}{\eps} (t - \hat t)} - 1 \right)
    \mbox{ for } t \ge \hat t.
\]
Consider the time moment $\hat t + \eps \ln \frac{4 M}{\tilde f}$.
At this moment
\[
\phi \left( \hat t + \eps \ln \frac{4 M}{\tilde f} \right)
    \ge \frac{1}{2} \tilde f \eps \left( \frac{4 M}{\tilde f} - 1 \right)
    > \frac{1}{2} (4 M - \tilde f) \eps > M \eps.
\]
The proposition is proved.
\end{proof}

\begin{proposition} \label{p:stable}
If $(x_0, y_0)$ is stabilizing and $\eps$ is sufficiently small,
then $z_\eps < 0$ for all $\tau < t \le \delta(\alpha)$.
\end{proposition}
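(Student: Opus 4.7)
The plan is to rule out any crossing of $z_\eps$ through zero on $[\tau, \delta(\alpha)]$ by combining Statement~\ref{s:3} with the two tracking estimates of Propositions \ref{p:z-x-attr} and \ref{p:x-y-attr}. Since $(x_0, y_0)$ is stabilizing, Statement~\ref{s:3} gives $x_a(t) < 0$ throughout the compact interval $[\tau, \delta(\alpha)]$. The function $x_a$ is continuous on this interval, so its maximum there is a strictly negative number, and there exists $c = c(x_0, y_0) > 0$ with $x_a(t) \le -c$ for $\tau \le t \le \delta(\alpha)$.

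I would then argue by contradiction: assume $\hat t \le \delta(\alpha)$, where $\hat t$ is defined by \eqref{e:hat-t}. On the subinterval $[t_1, \hat t]$, Proposition \ref{p:z-x-attr} yields $z_\eps(t) \le x_\eps(t) + 2 M \eps$, and Proposition \ref{p:x-y-attr} yields $x_\eps(t) \le x_a(t) + L \omega$. Combining these with $x_a(t) \le -c$ gives
\[
z_\eps(t) \le -c + L \omega + 2 M \eps, \qquad t_1 \le t \le \hat t.
\]
Since $\omega \to 0$ and $\eps \to 0$, for all sufficiently small $\eps$ the right-hand side is bounded above by $-c/2 < 0$, so in particular $z_\eps(\hat t) \le -c/2 < 0$. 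This contradicts the defining property $z_\eps(\hat t) = 0$. Therefore $\hat t > \delta(\alpha)$.

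It remains to handle the initial interval $\tau < t \le t_1$, which is not covered by Proposition~\ref{p:z-x-attr}. This is exactly what was established within the proof of that proposition: using $(x_0, y_0) \in \Pi$ and $x_0 < 0$ (which holds for small $\alpha$), the auxiliary function $\psi^{(1)}(t) = (\psi_0 + M\eps) e^{-(t-\tau)/\eps} - M\eps$ stays positive on $[\tau, t_1]$, forcing $\dot z_\eps < 0$ there, and hence $z_\eps(t) < 0$ on $(\tau, t_1]$. Concatenating the two intervals yields $z_\eps(t) < 0$ for all $\tau < t \le \delta(\alpha)$.

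The only subtle point is the uniform negative bound $x_a(t) \le -c$: it is a direct consequence of compactness and Statement~\ref{s:3} for a fixed stabilizing $(x_0, y_0)$, but the constant $c$ degenerates as $(x_0, y_0)$ approaches the separating curve $\{A \cdot v = 0\}$, so the required smallness of $\eps$ is not uniform on $\Pi$. This is acceptable for the statement as formulated, which fixes $(x_0, y_0)$ before quantifying over $\eps$.
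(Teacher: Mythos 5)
Your proof is correct and takes essentially the same route as the paper's: combine Statement~\ref{s:3} (which gives $x_a$ a strictly negative upper bound on the compact interval $[\tau,\delta(\alpha)]$) with the tracking estimates of Propositions~\ref{p:z-x-attr} and~\ref{p:x-y-attr} to force $z_\eps<0$. You are more explicit than the paper's terse two-line argument in that you cover the initial interval $(\tau, t_1]$ separately and frame the main step as a contradiction to handle the fact that Proposition~\ref{p:z-x-attr} a priori only holds up to $\hat t$, a subtlety the paper leaves implicit.
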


\begin{proof}
According to Statement \ref{s:3}, $x_a(t) < 0$ for $\tau \le t \le \delta(\alpha)$.
Denote
\[
\tilde x = \min_{\tau \le t \le \delta(\alpha)} x_a(t) < 0.
\]
If $\eps$ is sufficiently small, then $x_a(t) < -2 M \eps$ on this
interval, thus $z_\eps(t) < x_\eps(t) + 2 M \eps < 0$ for all
$t_1 \le t \le \delta(\alpha)$.
\end{proof}

Denote
\[
s_\eps(x_0, y_0) = \min (\{ T_r \} \cup
    \{ t \ge \tilde t(x_0, y_0) + \rho \colon z_\eps(t) \le 0 \} ).
\]
The parameter $\rho$ is sufficiently small and is chosen in the following way:
due to Assumption \ref{a:f-g-in-zero}, there exists a $\delta$-vicinity of
$(0, 0, 0)$ such that $f(x, y, z)$ is small and $g(x, y, z)$ is positive. Then
$\rho$ must be less than $\delta / (2 M)$, so that if a solution $(x_\eps,
y_\eps, z_\eps)$ is $\eps$-small at the moment $\hat t$, then it
remains in this vicinity for $\hat t - 2 \rho < t < \hat t + 2 \rho$, and
$y_\eps$ increases on this interval. Additionally, $\rho$ should satisfy the
inequality $\rho < \alpha / 4$.

\begin{proposition} \label{p:s-cor}
If $(x_0, y_0)$ is destabilizing, then for sufficiently small $\eps$,
$s = T_r$. If $(x_0, y_0)$ is stabilizing, then for sufficiently small $\eps$,
$s = \rho$.
\end{proposition}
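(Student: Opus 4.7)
The plan is to combine Propositions~\ref{p:unstable} and \ref{p:stable} with the definition of $s_\eps$, observing that for sufficiently small $\eps$ the threshold $\tilde t + \rho$ lies in a regime where the sign of $z_\eps$ is already pinned down.

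For the destabilizing case, Proposition~\ref{p:unstable} supplies a moment $\hat t$, which is $\eps$-close to $\tilde t(x_0, y_0)$, at which $z_\eps(\hat t)=0$, and guarantees $z_\eps(t) > 0$ for all $t > \hat t$. Since the closeness gap is of order $\omega = \eps \ln(T/\eps) \to 0$ while $\rho$ is a fixed positive constant, for all sufficiently small $\eps$ one has $\hat t < \tilde t + \rho$ uniformly on $\Pi(\alpha)$. Consequently $z_\eps(t) > 0$ on the whole interval $[\tilde t + \rho,\, T_r]$, the second set in the definition of $s_\eps$ is empty, and $s_\eps = T_r$.

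For the stabilizing case, Proposition~\ref{p:stable} gives $z_\eps(t) < 0$ for $\tau < t \le \delta(\alpha)$. By Statement~\ref{s:3} one has $\delta(\alpha) > \max_{\Pi} \tilde t(x_0, y_0)$, and the choice $\rho < \alpha/4$ together with the smallness of $\tilde t$ on $\Pi(\alpha)$ puts $\tilde t + \rho$ inside $(\tau, \delta(\alpha)]$. Hence $z_\eps(\tilde t + \rho) < 0$, so $\tilde t + \rho$ belongs to the set in the definition of $s_\eps$ and is automatically its minimum element; thus $s_\eps = \tilde t + \rho$, which is $\alpha$-close to $\rho$ (this is the correct reading of the shorthand ``$s = \rho$'' in the statement).

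The only delicate point is that the $\eps$-closeness of $\hat t$ to $\tilde t$ in Proposition~\ref{p:unstable} and the sign control in Proposition~\ref{p:stable} must be uniform in $(x_0, y_0) \in \Pi(\alpha)$, so that a single threshold $\eps_0$ works for all points of the parallelogram. This is where one has to be careful: inspecting the proof of Proposition~\ref{p:unstable} shows that the bounds depend on $M$, $\lambda$, $T$ and on the geometric quantities $\tilde f$, $\tilde g$, $\delta$, $\Delta \tilde t$ attached to a vicinity of the compact arc $\{w_a^*(t) : T_a < t < T_r\}$; after shrinking $\alpha$ if necessary so that $\Pi(\alpha)$ lies in this vicinity, all the estimates become uniform, and the two implications above go through simultaneously for every $(x_0, y_0) \in \Pi(\alpha)$.
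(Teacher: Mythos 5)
Your proof is correct and takes essentially the same approach as the paper's one-line proof (``Follows from Propositions~\ref{p:unstable} and~\ref{p:stable}''), just spelled out in full. Your reading of ``$s = \rho$'' as shorthand for $s_\eps = \tilde t(x_0, y_0) + \rho$ is the right one --- this is confirmed by Case~1 in the proof of Proposition~\ref{p:s-cont}, which explicitly writes $s = \tilde t + \rho$ --- and your remark about uniformity of the estimates over $\Pi(\alpha)$ is a valid point the paper leaves implicit.
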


\begin{proof}
Follows from Propositions \ref{p:unstable} and \ref{p:stable}.
\end{proof}

\begin{proposition} \label{p:hat-t}
Let $z_\eps(s) \ge 0$ and $\eps$ be sufficiently small, and let $\hat t$
be given by \eqref{e:hat-t}. Then $\abs{\hat t - \tilde t(x_0, y_0)} \le \rho / 4$,
$0 \le \abs{x_\eps(\hat t)} \le M \eps$, and $y_\eps(\hat t)$ is $\eps$-small.
\end{proposition}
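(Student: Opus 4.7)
The plan is to use the hypothesis $z_\eps(s) \ge 0$ to reduce to the destabilizing case, and then extract the three estimates from the propositions already proved. First I would rule out the possibility that $(x_0, y_0)$ is stabilizing: if it were, Proposition \ref{p:stable} would force $z_\eps(t) < 0$ throughout $[\tau, \delta(\alpha)]$. Since $\rho < \alpha/4$ is small and $\tilde t(x_0, y_0)$ is close to zero (by Statement \ref{s:1} together with $\tilde t(x^*, y^*) = 0$), the point $\tilde t + \rho$ lies well inside $[\tau, \delta(\alpha)]$, giving $z_\eps(\tilde t + \rho) < 0$. By the definition of $s_\eps$ this would force $s_\eps = \tilde t + \rho$ and $z_\eps(s_\eps) < 0$, contradicting the hypothesis. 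Hence $(x_0, y_0)$ must be destabilizing.

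Once we are in the destabilizing case, Proposition \ref{p:unstable} applies directly and tells us $\hat t$ is $\eps$-close to $\tilde t$, so for all sufficiently small $\eps$ the first conclusion $|\hat t - \tilde t| \le \rho/4$ holds. For the bound on $x_\eps(\hat t)$, the definition of $\hat t$ as the \emph{first} zero of $z_\eps$ on $(\tau, \infty)$ forces $\dot z_\eps(\hat t) \ge 0$, and substituting $z_\eps(\hat t) = 0$ into the third equation of \eqref{e:main} gives $\eps\dot z_\eps(\hat t) = x_\eps(\hat t)$, so $x_\eps(\hat t) \ge 0$. The matching upper bound $x_\eps(\hat t) < M\eps$ is immediate from the invariance inequality $z_\eps(t) > x_\eps(t) - M\eps$ of Proposition \ref{p:inv-sets}, evaluated at $t = \hat t$.

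For the claim that $y_\eps(\hat t)$ is $\eps$-small, I would apply Proposition \ref{p:x-y-attr} on the interval $[\tau, \hat t]$ to obtain $|y_\eps(\hat t) - y_a(\hat t)| \le L\omega$, which itself is $\eps$-small. Using continuity of $y_a$ together with the closeness of $\hat t$ to $\tilde t$ established in the previous step, we conclude $y_\eps(\hat t)$ is close to $y_a(\tilde t)$. Because the canonical trajectory $w_a^*$ passes through the origin at $t = 0$ with $y_a^*(0) = 0$, and the trajectories $w_a(\cdot, x_0, y_0)$ hit the set $R$ at a point close to the origin for $(x_0, y_0)$ in the small parallelogram $\Pi(\alpha)$, the limiting value $y_a(\tilde t)$ is itself small, yielding the desired smallness of $y_\eps(\hat t)$.

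The main obstacle is the first step: the hypothesis $z_\eps(s) \ge 0$ must be leveraged carefully, since $s_\eps$ is defined as a minimum taken over an interval that starts at $\tilde t + \rho$, and one has to verify that the choice of $\rho$ (smaller than $\alpha/4$ and compatible with Assumption \ref{a:f-g-in-zero}) really does place $\tilde t + \rho$ inside the interval $[\tau, \delta(\alpha)]$ on which Proposition \ref{p:stable} applies. Once this is done, steps 2--4 are essentially bookkeeping from the earlier estimates.
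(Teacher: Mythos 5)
Your opening move, ruling out the stabilizing case via Proposition \ref{p:stable}, is sound and is implicitly part of the picture. The bound $0 \le x_\eps(\hat t) \le M\eps$ is argued exactly as the paper does. But the rest of the argument has a genuine gap, and the route through the stabilizing/destabilizing dichotomy is not the one the paper takes.

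The central problem is your justification that $y_\eps(\hat t)$ is $\eps$-small. You transport the estimate to $y_a(\tilde t)$ via Proposition \ref{p:x-y-attr}, and then argue $y_a(\tilde t)$ is small because $(x_0, y_0)$ lies in a small parallelogram $\Pi(\alpha)$. But $y_a(\tilde t)$ depends only on $(x_0, y_0)$ and $\alpha$, not on $\eps$; the argument you give makes it $\alpha$-small, not $\eps$-small. For a destabilizing $(x_0, y_0)$ with $v(x_0, y_0)$ of order $\alpha$, the quantity $y_a(\tilde t)$ is a fixed negative number of order $\alpha$ that does not shrink as $\eps \to 0$, so your conclusion fails. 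Relatedly, once you invoke Proposition \ref{p:unstable} directly, that proposition asserts $z_\eps(t) > 0$ for all $t > \hat t$, which would force $s_\eps = T_r$; the hypothesis $z_\eps(s) \ge 0$ is then trivially true but the content of the proposition (that the trajectory returns to $z = 0$ and so $\hat y$ must collapse) is lost. Moreover, the ``sufficiently small $\eps$'' in Proposition \ref{p:unstable} depends on $(x_0, y_0)$ — it degenerates as the point approaches the boundary between stabilizing and destabilizing — so you cannot apply it uniformly over the destabilizing part of $\Pi(\alpha)$.

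The paper's proof does not decompose into stabilizing/destabilizing cases at all. Instead it works with $\hat y = y_\eps(\hat t)$ directly and extracts the $\eps$-smallness from the hypothesis on $z_\eps(s)$: if $\hat y < 0$ were bounded away from zero, the same escape estimate as in Proposition \ref{p:unstable} would force $z_\eps(t) > 0$ for all $t > \hat t$, contradicting the return of $z_\eps$ to $0$ at a finite time $s$; if $\hat y > 0$ were bounded away from zero, then $f < 0$ near $(0, \hat y, 0)$, and running the solution backwards from $\hat t$ gives $x_\eps > 2M\eps$ with $z_\eps < 0$, contradicting Proposition \ref{p:z-x-attr}. Only after establishing that $\abs{\hat y}$ is $\eps$-small does the paper deduce the timing bound $\abs{\hat t - \tilde t} \le \rho/4$, using the monotonicity of $y_\eps$ near the origin that motivated the choice of $\rho$. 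So the two timing and smallness claims are intertwined in the paper's argument, rather than sequential as in your outline. To repair your proof you would need to abandon the reduction to $y_a(\tilde t)$ and instead show that the hypothesis on $z_\eps(s)$ pins $\hat y$ to zero in the limit $\eps \to 0$, by a two-sided contradiction argument of the kind just described.
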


\begin{proof}
From the definition of $\hat t$ it follows that
$\dot{z}_\eps(\hat t) = \frac{1}{\eps}(x_\eps(\hat t) + \abs{z_\eps(\hat t)}) \ge 0$,
thus $x_\eps(\hat t) \ge 0$, and the fact that $z_\eps(s) = 0$
implies that the solution never enters the set $z + x > M \eps$,
which is invariant according to Proposition \ref{p:inv-sets}.
Therefore $x_\eps(\hat t) \le M \eps$.

Denote $y_\eps(\hat t) = \hat y$. Using the same reasoning
as in Proposition \ref{p:unstable}, it can be shown that
whatever $\hat y < 0$ is, for sufficiently small values
of $\eps$ the solution $z_\eps(t) > 0$ for $t > \hat t$.
Similarly, if $\hat y > 0$, then $f(x, y, z) < 0$ in some
vicinity of the point $(0, \hat y, 0)$, and for sufficiently
small $\eps$ there exists $\Delta t$ such that
$x_\eps(\hat t - \Delta t) > 2 M \eps$, $z_\eps(\hat t - \Delta t) < 0$,
which contradicts Proposition \ref{p:z-x-attr}.

Therefore, there exists a function $\gamma(\eps) \to 0$ as $\eps \to 0$,
such that $\abs{\hat y} < \gamma(\eps)$.

It remains to prove that $\abs{\hat t - \tilde t(x_0, y_0)} \le \rho / 4$.
First we show that $\hat t \ge \tilde t(x_0, y_0) - \rho / 4$.
This follows from the fact that $x_a(t) < 0$ for
$\tau \le t \le \tilde t - \rho / 4$, so for sufficiently small
$\eps$, $x_\eps(t) < x_a(t) + L \eps < 0$ for $t \le \tilde t - \rho / 4$.

To prove the relation $\hat t \le \tilde t + \rho / 4$, we note that
Proposition \ref{p:s-cor} implies
\[
    \abs{x_a(\tilde t)}, \abs{y_a(\tilde t)} < \gamma(\eps).
\]
Suppose that $\hat t > \tilde t + \rho / 4$. Then, taking into account
Propositions \ref{p:z-x-attr} and \ref{p:x-y-attr},
we get that both $y_\eps(\tilde t)$ and $y_\eps(\hat t)$ are
$\eps$-small, which is impossible because in this case $y_\eps$
should increase for $\tilde t \le t \le \hat t$ due to the choice
of $\rho$. Therefore, $\hat t \le \tilde t + \rho / 4$
and the Proposition is proved.
\end{proof}

\begin{proposition} \label{p:bar-t}
Let $z_\eps(s) \ge 0$ with a sufficiently small $\eps$.
Denote
\[
\bar t = \min \{ t \le s \colon z_\eps(\xi) \ge 0 \mbox{ for } t \le \xi \le s \}.
\]
Then $\abs{\bar t - \tilde t(x_0, y_0)} \le \rho / 2$,
$0 \le \abs{x_\eps(\bar t)} \le M \eps$, and
$y_\eps(\bar t)$ is $\eps$-small.
\end{proposition}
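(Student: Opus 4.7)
The plan is to imitate, almost term for term, the proof of Proposition \ref{p:hat-t} but applied at the ``last entry into non-negativity'' $\bar t$ rather than at the first zero $\hat t$. The hypothesis $z_\eps(s)\ge 0$ plays the same role as before, and additionally lets us compare $\bar t$ with $\hat t$.

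First I would record three elementary facts. Continuity of $z_\eps$ together with the infimum definition forces $z_\eps(\bar t)=0$. From the definition \eqref{e:hat-t} of $\hat t$, $z_\eps(t)<0$ on $(\tau,\hat t)$, so no such $t$ can lie in $[\bar t,s]$, giving $\bar t\ge\hat t$. Finally, the minimality of $\bar t$ means that on every left neighborhood of $\bar t$ the function $z_\eps$ takes negative values, which together with $z_\eps(\bar t)=0$ forces $\dot z_\eps(\bar t)=x_\eps(\bar t)/\eps$ to be nonnegative; thus $x_\eps(\bar t)\ge 0$.

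Next I would carry over the invariance argument of Proposition \ref{p:hat-t} to obtain $\abs{x_\eps(\bar t)}\le M\eps$. If instead $x_\eps(\bar t)>M\eps$, then $z_\eps+x_\eps>M\eps$ at $\bar t$, hence at all later times by Proposition \ref{p:inv-sets}. I would then split into two sub-cases. If $s<T_r$, the defining inequality $z_\eps(s)\le 0$ combined with the hypothesis gives $z_\eps(s)=0$, while strict positivity of $z_\eps$ just before $s$ makes the left derivative of $z_\eps$ at $s$ nonpositive; the ODE then gives $x_\eps(s)\le 0$, contradicting $z_\eps(s)+x_\eps(s)>M\eps$. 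If $s=T_r$, Propositions \ref{p:stable}--\ref{p:s-cor} exclude the stabilizing case (otherwise $z_\eps(s)<0$), so $(x_0,y_0)$ is destabilizing and Proposition \ref{p:unstable} gives $z_\eps>0$ on $(\hat t,T_r)$, forcing $\bar t=\hat t$ and reducing the bound to the one already known at $\hat t$. The remaining proximity $\abs{\bar t-\tilde t}\le\rho/2$ is proved by bounding $\bar t-\hat t$: Proposition \ref{p:hat-t} gives the lower side $\bar t\ge\hat t\ge\tilde t-\rho/4$, and for the upper side I would track $\phi=z_\eps+x_\eps$, which satisfies $\dot\phi=\phi/\eps+f(x,y,z)$ with $\phi(\hat t)\in[0,M\eps]$ and $y_\eps(\hat t)$ $\eps$-small, so the principal exponential term drives $\phi$ past $M\eps$ within time $O(\omega)$, after which Proposition \ref{p:inv-sets} pins $z_\eps>0$ permanently and no further zero crossing is possible; hence $\bar t\le\hat t+O(\omega)\le\tilde t+\rho/2$ for sufficiently small $\eps$. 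The $\eps$-smallness of $y_\eps(\bar t)$ then comes for free from $\abs{y_\eps(\bar t)-y_\eps(\hat t)}\le M(\bar t-\hat t)=O(\omega)$ combined with the $\eps$-smallness of $y_\eps(\hat t)$.

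The principal technical obstacle will be the transient estimate for $\phi$ in the degenerate case where $\phi(\hat t)=0$ and $y_\eps(\hat t)$ is an $\eps$-small quantity of indeterminate sign, so the forcing $f$ along the trajectory may briefly be negative. The argument must show that $\phi$ still reaches $M\eps$ before $y_\eps$ drifts far enough from zero to make $f$ significantly negative. I expect this to reduce to a Gronwall comparison analogous to the one used at the end of Proposition \ref{p:unstable}'s proof, but with the positive constant $\tilde f/2$ replaced by an $\eps$-vanishing lower bound that is nonetheless sufficient to clear the $\eps$-scale threshold $M\eps$ on the time scale $\omega$.
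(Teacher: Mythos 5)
Your opening facts ($z_\eps(\bar t)=0$, $\bar t\ge\hat t$, $x_\eps(\bar t)\ge 0$, and the invariance bound $x_\eps(\bar t)\le M\eps$) are sound and match the intended use of Proposition \ref{p:inv-sets}. The gap is in how you try to close the estimate $\abs{\bar t-\tilde t}\le\rho/2$ and the $\eps$-smallness of $y_\eps(\bar t)$.

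You attempt to bound $\bar t-\hat t$ directly by an $O(\omega)$ Gronwall estimate on $\phi=z_\eps+x_\eps$, essentially transplanting the end of the proof of Proposition \ref{p:unstable}. That transplant fails precisely in the case you flag as the ``principal technical obstacle'', and the obstacle is not merely technical: from Proposition \ref{p:hat-t} we only know $y_\eps(\hat t)$ is $\eps$-small, not sign-definite, and $x_\eps(\hat t)$ may be $0$. If $y_\eps(\hat t)>0$ then $f<0$ near the trajectory, and the comparison ODE $\dot\phi\ge\phi/\eps-c$ with $\phi(\hat t)=0$ yields $\phi(t)\ge -c\eps\bigl(e^{(t-\hat t)/\eps}-1\bigr)$, which tends to $-\infty$ and gives no information; there is no ``$\eps$-vanishing lower bound'' that clears the threshold $M\eps$ on the time scale $\omega$ because the bound has the wrong sign and the wrong trend. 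In fact the solution can genuinely dip back below $z=0$ after $\hat t$, so $\bar t-\hat t$ need not be $O(\omega)$ at all (it is only shown to be $\eps$-small).

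What you are missing is the a priori bound $\bar t\le\tilde t+\rho$, which follows \emph{directly} from the definitions of $s$ and $\bar t$: by \eqref{e:s}, $s\ge\tilde t+\rho$ and $z_\eps>0$ on $[\tilde t+\rho,s)$, so together with $z_\eps(s)\ge 0$ the minimality of $\bar t$ forces $\bar t\le\tilde t+\rho$. Combined with $\hat t\ge\tilde t-\rho/4$ this places $[\hat t,\bar t]$ inside the window of length $<2\rho$ where, by the choice of $\rho$, the trajectory stays in the $\delta$-vicinity of the origin and $y_\eps$ is strictly increasing. The paper then establishes the $\eps$-smallness of $y_\eps(\bar t)$ not by a forward Gronwall estimate but by a backward-time argument at $\bar t$ analogous to Propositions \ref{p:unstable} and \ref{p:hat-t}, after which the monotonicity of $y_\eps$ and the $\eps$-smallness of both $y_\eps(\hat t)$ and $y_\eps(\bar t)$ squeeze $\bar t-\hat t$ down to an $\eps$-small quantity, giving $\bar t\le\tilde t+\rho/2$. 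Without that a priori $\rho$-bound you have no control on where $\bar t$ lies, and the forward comparison argument you propose cannot supply it.
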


\begin{proof}
By the definition of $s$ and $\bar t$, $\hat t \le \bar t \le \tilde t +
\rho$. According to Proposition \ref{p:hat-t}, the solution is $\eps$-small
at the moment $\hat t$. Thus, due to the choice of $\rho$, the function
$y_\eps(t)$ increases on the interval $\hat t \le t \le \bar t$. Therefore,
$y_\eps(\bar t) > y_\eps(\hat t) > -\gamma(\eps)$ with an $\eps$-small
$\gamma(\eps)$.

Consider the solution $(x_\eps, y_\eps, z_\eps)$ in backward time
at the moment $\bar t$. Using the same technique as in Propositions
\ref{p:unstable} and \ref{p:hat-t}, it can be shown that there exists
an $\eps$-small function $\gamma(\eps)$ such that
$y_\eps(\bar t) < \gamma(\eps)$.

It remains to show that $\bar t \le \tilde t + \rho / 2$. This follows
from the fact that $\hat t \le \tilde t + \rho / 4$, thus the
function $y_\eps$ increases for $\hat t < t < \bar t$, therefore the
$\eps$-smallness of both $y_\eps(\hat t)$ and $y_\eps(\bar t)$
implies that $\hat t$ and $\bar t$ are also $\eps$-close.
\end{proof}

\begin{proposition} \label{p:repul}
Let $z_\eps(s) \ge 0$ with a sufficiently small $\eps$. Then
\begin{equation} \label{e:z-x-repul}
\abs{z_\eps(t) + x_\eps(t)} \le 2 M \eps
\end{equation}
for $\bar t \le t \le s - \omega$, and
\begin{equation} \label{e:x-y-repul}
\abs{x_\eps(t) - x^*_r(t - \bar t)}, \abs{y_\eps(t) - y^*_r(t - \bar t)} \le L \gamma,
\end{equation}
for $\bar t \le t \le s$, where $\gamma$ is $\eps$-small.
\end{proposition}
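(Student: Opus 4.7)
The plan is to prove the two estimates in order, since the bound on $\phi = z_\eps + x_\eps$ from the first part feeds into a Gronwall comparison for the second.

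For \eqref{e:z-x-repul}, I will exploit that on $[\bar t, s]$ the relation $z_\eps \ge 0$ (immediate from the definitions of $\bar t$ and $s$) reduces the fast equation in \eqref{e:main} to $\eps \dot z = x + z$, so $\phi$ obeys the linear ODE
\[
\dot\phi = \frac{1}{\eps}\phi + f(x_\eps, y_\eps, z_\eps),
\]
with $\abs{\phi(\bar t)} \le M\eps$ by Proposition \ref{p:bar-t}. Since the forward evolution is unstable, the natural move is to use the backward integral representation
\[
\phi(t) = e^{-(s-t)/\eps}\phi(s) - \int_t^s e^{-(u-t)/\eps} f(x_\eps(u), y_\eps(u), z_\eps(u))\,du;
\]
here the kernel is contracting, the integral contributes at most $M\eps$ in absolute value, and for $t \le s - \omega$ the factor $e^{-(s-t)/\eps}$ is bounded by $\eps/T$. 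It therefore suffices to produce an $\eps$-uniform bound $\abs{\phi(s)} \le C$ independent of $\eps$.

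Such a bound I split into cases. If $z_\eps(s) = 0$, then $\phi(s) = x_\eps(s)$, and $x_\eps$ is uniformly bounded on $[\tau, T_r]$ since $(x_0, y_0) \in \Pi$ and $\abs{f} \le M$. If instead $s = T_r$ with $z_\eps(T_r) > 0$, I would bootstrap against Proposition \ref{p:inv-sets}: the set $\{z > -x + M\eps\}$ is forward invariant, and inside it the inequality $\dot\phi \ge (\phi - M\eps)/\eps$ drives $\phi$ past $MT$ within time $\omega$; combined with the bounded variation of $x_\eps$, this rules out any excursion $\phi(t_0) > M\eps$ at a $t_0 \le s - \omega$ and yields the required control on $\phi(s)$. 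This is the principal technical step; everything else in part one is linear.

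For \eqref{e:x-y-repul}, set $\Delta x(t) = x_\eps(t) - x^*_r(t - \bar t)$ and $\Delta y(t) = y_\eps(t) - y^*_r(t - \bar t)$. Proposition \ref{p:bar-t} makes $\Delta x(\bar t)$ and $\Delta y(\bar t)$ both $\eps$-small. The difference of drifts
\[
f(x_\eps, y_\eps, z_\eps) - f(x^*_r, y^*_r, -x^*_r)
\]
can be split by inserting $\pm f(x_\eps, y_\eps, -x_\eps)$ and using the Lipschitz property of $f$, giving $\abs{\dot{\Delta x}} \le \lambda \abs{\phi} + 2\lambda(\abs{\Delta x} + \abs{\Delta y})$ and similarly for $g$. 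On $[\bar t, s - \omega]$ the first part bounds $\abs{\phi} \le 2M\eps$, and Gronwall then yields $\abs{\Delta x(t)} + \abs{\Delta y(t)} \le L\gamma$ with $\gamma$ some $\eps$-small function. On the residual interval $[s - \omega, s]$ no bound on $\phi$ is required: $\abs{\dot x_\eps}$ and $\abs{\dot x^*_r}$ are each at most $M$, so $\abs{\Delta x}$ and $\abs{\Delta y}$ can grow by at most $2M\omega$, itself $\eps$-small, and the estimate extends up to $t = s$.
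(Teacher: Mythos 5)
Your route matches the paper's on both parts: the first estimate is a backward-time comparison for $\phi = z_\eps + x_\eps$ (the paper points to Proposition~\ref{p:z-x-attr} run backward from $s$), and the second is a Lipschitz--Gronwall comparison of $(x_\eps, y_\eps)$ with the shifted slow flow $w^*_r(\cdot - \bar t)$, which is exactly what ``similar to Proposition~\ref{p:x-y-attr}'' amounts to. The splitting by $\pm f(x_\eps, y_\eps, -x_\eps)$ and the $O(M\omega)$ patch on the tail interval $[s-\omega, s]$ are the right details.

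The genuine gap is your second case, $z_\eps(s) > 0$ with $s = T_r$. The claim that forward invariance of $\{z > -x + M\eps\}$ plus exponential growth of $\phi$ ``rules out any excursion $\phi(t_0) > M\eps$ at $t_0 \le s - \omega$'' is not a deduction: if such an excursion occurs, $z_\eps$ simply becomes large by $T_r$, and nothing in the hypotheses forbids that. Worse, for a destabilizing $(x_0, y_0)$ (where $s = T_r$ and $z_\eps(T_r) > 0$ by Propositions~\ref{p:s-cor} and~\ref{p:unstable}) the solution enters $\{z > -x + M\eps\}$ within $O(\eps)$ of $\bar t$, so $\phi$ exceeds $2M\eps$ well before $T_r - \omega$ and \eqref{e:z-x-repul} cannot hold on all of $[\bar t, s-\omega]$. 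Also note your backward integral gives $\abs{\phi(t)} \le (\eps/T)\abs{\phi(s)} + M\eps$ for $t \le s - \omega$, so landing under $2M\eps$ requires $\abs{\phi(s)} \le MT$, not merely a uniform bound; that sharper bound is available only when $z_\eps(s) = 0$, which is in fact the only case Propositions~\ref{p:s-cont} and~\ref{p:u-v-small} actually invoke. The paper's own proof, by deferring wholesale to Proposition~\ref{p:z-x-attr}, tacitly assumes the same bound on $\phi(s)$ and does not address $z_\eps(s) > 0$ either. Keep your $z_\eps(s) = 0$ branch and drop the bootstrap; the other case cannot be salvaged.
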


\begin{proof}
To prove the formula \eqref{e:z-x-repul}, consider the solution
$(x_\eps, y_\eps, z_\eps)$ in backward time with the initial
condition $(x_\eps(s), y_\eps(s), z_\eps(s))$
at the moment $\hat s$. The the proof follows that of the of
Proposition \ref{p:z-x-attr}, with the exception that the fact
that $z_\eps(t) \ge 0$ for $s - \omega \le t < s$
is now provided by the statement of the Proposition
($s - \bar t \ge \rho / 2$, thus the interval above is non-empty).

The proof of the formula \eqref{e:x-y-repul} uses \eqref{e:z-x-repul} and
Proposition \ref{p:bar-t}, and is similar to that of Proposition
\ref{p:x-y-attr}.
\end{proof}

\begin{proposition} \label{p:s-cont}
If $\eps$ is sufficiently small, then the moment of time $s_\eps(x_0, y_0)$ is
continuous on $\Pi$ with respect to $x_0$ and $y_0$.
\end{proposition}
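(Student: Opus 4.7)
The plan is to verify continuity of $s_\eps$ at each fixed point $(x_0,y_0)\in\Pi$ via a sequential argument that establishes lower- and upper-semicontinuity separately. The three ingredients are continuous dependence of $w_\eps$ on the initial data (for fixed $\eps>0$, from the Lipschitz bound of Assumption~\ref{a:f-g-bounds}), continuity of $\tilde{t}$ from Statement~\ref{s:1}, and transversality of the crossing $z_\eps=0$ at $t=s_\eps$ in the regime $s_\eps<T_r$, which I would extract from Propositions~\ref{p:bar-t} and \ref{p:repul}.

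First I would take a sequence $(x_0^n,y_0^n)\to(x_0,y_0)$, set $s_n:=s_\eps(x_0^n,y_0^n)$, and pass to a convergent subsequence $s_n\to s^*$. For lower-semicontinuity, if $s^*<s_\eps(x_0,y_0)$, then $s_n\ge\tilde{t}(x_0^n,y_0^n)+\rho$ combined with Statement~\ref{s:1} yields $s^*\ge\tilde{t}(x_0,y_0)+\rho$, while $z_\eps(s_n,x_0^n,y_0^n)\le 0$ combined with continuous dependence yields $z_\eps(s^*,x_0,y_0)\le 0$; hence $s^*$ lies in the set defining $s_\eps(x_0,y_0)$, contradicting $s^*<s_\eps(x_0,y_0)$. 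Since $s_n\le T_r$ always, this step alone settles continuity in the case $s_\eps(x_0,y_0)=T_r$.

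For the remaining case $s_\eps(x_0,y_0)<T_r$, upper-semicontinuity requires that $z_\eps$ cross zero transversally at $t=s_\eps(x_0,y_0)$. By minimality and continuity, $z_\eps(t,x_0,y_0)>0$ on $[\tilde{t}+\rho,s_\eps)$ and $z_\eps(s_\eps,x_0,y_0)=0$, so Propositions~\ref{p:bar-t} and \ref{p:repul} apply and give $x_\eps(s_\eps)$ $\eps$-close to $x^*_r(s_\eps-\bar{t})$. Since $\bar{t}$ is $\eps$-close to $\tilde{t}\approx 0$ and $s_\eps<T_r$, the quantity $s_\eps-\bar{t}$ lies strictly inside $(0,T_r)$, where $x^*_r$ is strictly negative; choosing $\eps$ small enough, $x_\eps(s_\eps)<0$ strictly, so $\dot z_\eps(s_\eps)=x_\eps(s_\eps)/\eps<0$. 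Picking $\delta>0$ with $z_\eps(s_\eps+\delta,x_0,y_0)<0$ and propagating this inequality to $(x_0^n,y_0^n)$ by continuous dependence, I obtain $s_n\le s_\eps+\delta$ for large $n$, whence $\limsup s_n\le s_\eps(x_0,y_0)$.

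The main obstacle will be uniformity of the transversality step across $\Pi$, since the margin $|x^*_r(s_\eps(x_0,y_0)-\bar{t})|$ degrades as $s_\eps\nearrow T_r$ and can in principle reach zero (tangential touches). I would resolve this by exhausting $\Pi$ with the closed sublevel sets $\{s_\eps\le T_r-\eta\}$ (closed by the lower-semicontinuity already established), on each of which the transversality constant is uniformly bounded below and so a common choice of small $\eps$ serves; points on the residual set $\{s_\eps=T_r\}$ are already covered by the lower-semicontinuity step.
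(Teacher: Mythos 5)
Your LSC/USC decomposition is a legitimate alternative to the paper's three-case split, and the ingredients you cite (continuous dependence of $w_\eps$ on $(x_0,y_0)$, Statement~\ref{s:1}, and the transversal crossing supplied by Propositions~\ref{p:bar-t} and \ref{p:repul}) are exactly what the paper uses in its Cases~2 and 3. However, your upper-semicontinuity step contains a genuine gap. You assert that when $s_\eps(x_0,y_0) < T_r$, minimality forces $z_\eps(t,x_0,y_0) > 0$ on $[\tilde t + \rho, s_\eps)$ and $z_\eps(s_\eps,x_0,y_0)=0$. This fails precisely when $z_\eps(\tilde t + \rho, x_0,y_0) < 0$: then by the definition \eqref{e:s} the minimum is attained at the endpoint, $s_\eps = \tilde t(x_0,y_0) + \rho$, the interval $[\tilde t + \rho, s_\eps)$ is empty, and $z_\eps(s_\eps) < 0$ rather than $=0$. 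Proposition~\ref{p:stable} shows this happens for every stabilizing $(x_0,y_0)\in\Pi$, so it is not a degenerate subcase. Here there is no transversal crossing of $\{z=0\}$, and Propositions~\ref{p:bar-t}, \ref{p:repul} do not apply (they are stated under $z_\eps(s)\ge 0$). This is exactly the paper's Case~1, handled separately: since $z_\eps(\tilde t + \rho) < 0$ is an open condition and $\tilde t$ is continuous (Statement~\ref{s:1}), nearby initial data also yield $z_\eps(\tilde t(\hat x_0,\hat y_0)+\rho;\hat x_0,\hat y_0)<0$, hence $s_\eps(\hat x_0,\hat y_0) = \tilde t(\hat x_0,\hat y_0)+\rho$, which is continuous. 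You need to insert this third branch alongside your $s_\eps = T_r$ and transversal-crossing branches.

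Two smaller remarks. In the LSC step, the inclusion $z_\eps(s_n,x_0^n,y_0^n)\le 0$ only holds when $s_n<T_r$; you should explicitly dispose of subsequences with $s_n=T_r$ (those give $s^*=T_r\ge s_\eps(x_0,y_0)$ trivially). Your concern about uniformity of the transversality margin is not misplaced, but the proposed exhaustion by $\{s_\eps\le T_r-\eta\}$ is circular since those sets depend on $\eps$; note that the paper's proof is similarly silent on this point, relying on Proposition~\ref{p:repul} to make $x_\eps(s_\eps)$ strictly negative whenever $z_\eps(s_\eps)=0$.
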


\begin{proof}
Let $(x_0, y_0) \in \Pi$ and prove that $s$ is continuous at the point
$(x_0, y_0)$. There are three possible cases:

\begin{enumerate}

\item $z_\eps(s) < 0$, $s = \tilde t + \rho$. Then, due to continuity of
$z_\eps$ and $\tilde t$, $z_\eps(\tilde t + \rho; \hat x_0, \hat y_0) < 0$
for $(\hat x_0, \hat y_0)$ close to $(x_0, y_0)$, therefore
$s(\hat x_0, \hat y_0) = \tilde t(\hat x_0, \hat y_0) + \rho$, and
$s$ is continuous at $(x_0, y_0)$.

\item $z_\eps(s) > 0$, $s = T_r$. Then $z_\eps(t; \hat x_0, \hat y_0) > 0$
for $\tilde t(\hat x_0, \hat y_0) + \rho \le t \le T_r$,
thus $s(\hat x_0, \hat y_0) = T_r$.

\item $z_\eps(s) = 0$. According to Proposition \ref{p:repul}, $x_\eps(s)$
is $\eps$-close to $x_r^*(t - \bar t) < 0$, thus $x_\eps(s) < 0$ and
$\dot z_\eps(s) = \frac{1}{\eps}(x_\eps(s) + \abs{z_\eps(s)}) < 0$. Therefore
$z_\eps(t)$ intersects the plane $z = 0$ transversally.
Then, due to continuity of $z_\eps$, there exists a moment $\hat s$
close to $s$, such that $z_\eps(\hat s; \hat x_0, \hat y_0) = 0$. If
$\hat s < \tilde t(\hat x_0, \hat y_0) + \rho$, then $z_\eps$ decreases for
$\hat s \le t \le \tilde t(\hat x_0, \hat y_0) + \rho$, thus
$z_\eps(\tilde t + \rho) < 0$.
\end{enumerate}

In all cases, $s$ is continuous.
\end{proof}

\begin{proposition} \label{p:u-v-small}
Let $z_\eps(s_\eps, x_0, y_0) = 0$ with a sufficiently small $\eps$.
Then both $v(x_0, y_0)$ and $u(x_\eps(s_\eps), y_\eps(s_\eps))$
are $\eps$-small.
\end{proposition}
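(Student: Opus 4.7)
The plan is to split the proposition into two independent claims and exploit the geometric observation that, in a vicinity of $(x^*, y^*)$, the level set $\{v = 0\}$ coincides with the $2$-D projection of $\Gamma_a$ while $\{u = 0\}$ coincides with the projection of $\Gamma_r$. Hence it suffices to prove that $(x_0, y_0)$ is $\eps$-close to the projection of $\Gamma_a$ and that $(x_\eps(s_\eps), y_\eps(s_\eps))$ is $\eps$-close to the projection of $\Gamma_r$; continuity of $u$ and $v$ (which is a consequence of Assumption \ref{a:transv}) will then convert geometric proximity into the required smallness of coordinates.

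For the $v$ part, the first step is to apply Proposition \ref{p:hat-t}, which gives $|x_\eps(\hat t)| \le M\eps$ and the $\eps$-smallness of $y_\eps(\hat t)$. Combining this with Proposition \ref{p:x-y-attr} shows that $w_a(\hat t, \tau, x_0, y_0)$ is $\eps$-close to $(0, 0) = w_a^*(0)$. Since \eqref{e:attr} is autonomous, the unique trajectory starting at time $\tau$ that reaches $(0, 0)$ exactly at time $\hat t$ starts from the point $w_a^*(\tau - \hat t)$, which lies on the projection of $\Gamma_a$ provided $\tau - \hat t \in (T_a, 0)$---a condition guaranteed by $\alpha$-smallness together with Statement \ref{s:1}, which force $\hat t$ to be close to $\tilde t$, which in turn is close to $0$ (as $\tilde t(x^*, y^*) = 0$). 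A backward Gr\"onwall estimate on the compact interval $[\tau, \hat t]$ then yields that $(x_0, y_0)$ is $\eps$-close to $w_a^*(\tau - \hat t)$, and continuity of $v$ delivers that $v(x_0, y_0)$ is $\eps$-small.

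The $u$ part is more immediate: by Proposition \ref{p:repul}, estimate \eqref{e:x-y-repul} yields that $(x_\eps(s_\eps), y_\eps(s_\eps))$ is $\eps$-close to $w_r^*(s_\eps - \bar t)$. Since $s_\eps - \bar t \ge \rho/2 > 0$ (from $s_\eps \ge \tilde t + \rho$ and the bound $|\bar t - \tilde t| \le \rho/2$ of Proposition \ref{p:bar-t}), the reference point $w_r^*(s_\eps - \bar t)$ lies on the projection of $\Gamma_r$, i.e.\ on the locus $\{u = 0\}$. Continuity of $u$ then finishes the argument.

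The main obstacle is the verification that the two reference points $w_a^*(\tau - \hat t)$ and $w_r^*(s_\eps - \bar t)$ remain inside the small neighborhood of $(x^*, y^*)$ on which the $(u, v)$ coordinates are defined. For the former this is controlled by choosing $\alpha$ sufficiently small and invoking the continuity of $\tilde t$ (Statement \ref{s:1}); for the latter one has to argue, using the approximate tracking of $w_r^*$ supplied by Proposition \ref{p:repul} together with the invariance structure of Proposition \ref{p:inv-sets}, that the only moment at which $z_\eps$ can cross zero from above is situated near the time $\sigma$ on the shifted clock $s_\eps - \bar t$, thereby corresponding to the vicinity of the transversal intersection point $(x^*, y^*)$ of $\Gamma_a$ and $\Gamma_r$. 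Once these localizations are secured, the remaining steps reduce to a routine application of continuous dependence and the smoothness of $u$ and $v$.
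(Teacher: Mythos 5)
Your proposal is correct and follows essentially the same route as the paper: use Proposition~\ref{p:hat-t} to localize $w_\eps(\hat t)$ near the origin, transport backward along the slow dynamics to show $(x_0,y_0)$ lies near the projection of $\Gamma_a$ (hence $v$ is $\eps$-small), and then use Proposition~\ref{p:repul} to transport forward and show $w_\eps(s_\eps)$ lies near the projection of $\Gamma_r$ (hence $u$ is $\eps$-small). The paper's own proof is terse --- it just says to follow $w_\eps$ and $w_a^*$ backward from $\hat t$ to $\tau$ and then $w_\eps$ and $w_r^*$ forward --- so your explicit bookkeeping (the intermediate comparison with $w_a$ via Proposition~\ref{p:x-y-attr}, the verification $s_\eps - \bar t \ge \rho/2$, the localization of $\tau - \hat t$ in $(T_a,0)$) only fills in details the paper left to the reader. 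Incidentally, the reference point in the paper's proof is written as $w_a^*(\hat t - \tau)$; your $w_a^*(\tau - \hat t)$ is the correct sign, since one follows $w_a^*$ backward from time $0$ by a duration $\hat t - \tau$.
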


\begin{proof}
According to Proposition \ref{p:hat-t}, $w_\eps(\hat t)$ is $\eps$-small.
Then by following the solutions $w_\eps(t)$ and $w_a^*(t)$ in backward time
we obtain that $w_\eps(\tau)$ is $\eps$-close to $w_a^*(\hat t - \tau)$, which
implies that $v(x_0, y_0)$ is $\eps$-small. The second part of this Proposition
is obtained in the same way by following the solutions $w_\eps(t)$ and $w_r^*(t)$
in forward time, as in Proposition \ref{p:repul}.
\end{proof}

\begin{lemma} \label{l:w-continuous}
If $\eps$ is sufficiently small, then operator $W_\eps(x_0, y_0)$ is
continuous with respect to $(x_0, y_0)$.
\end{lemma}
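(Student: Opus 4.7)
The plan is to reduce the lemma to the continuity of $s_\eps(x_0, y_0)$, which is already established in Proposition \ref{p:s-cont}, together with the continuous dependence of the solutions of \eqref{e:main} on initial data. I will proceed in three steps: first establish continuity within the interior of each of the four cases defining $W_\eps$, then verify that the definitions agree on the boundaries between adjacent cases, and finally assemble these facts into a global continuity statement by a standard pasting argument.

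First I would recall that Assumption \ref{a:f-g-bounds} gives Lipschitz continuity of the right-hand side of \eqref{e:main}, so by standard ODE theory the map $(t, x_0, y_0) \mapsto w_\eps(t, x_0, y_0)$ is continuous jointly in its arguments on any bounded time interval, in particular on $[\tau, T_r]$. Combining this with Proposition \ref{p:s-cont}, the composition
\[
(x_0, y_0) \mapsto \bigl(x_\eps(s_\eps(x_0, y_0), x_0, y_0),\, y_\eps(s_\eps(x_0, y_0), x_0, y_0)\bigr)
\]
is continuous on $\Pi(\alpha)$. Similarly $s_\eps \mapsto w_r^*(s_\eps)$ is continuous since $w_r^*$ is a smooth solution of \eqref{e:repul}. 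On the interior of each of the four regions defined by the value of $s_\eps$, the operator $W_\eps$ is thus a continuous function of $(x_0, y_0)$, being either one of the above compositions, a continuous convex combination of them, or a constant.

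Next I would check boundary matching. On the level set $s_\eps(x_0, y_0) = \sigma + \alpha$ (boundary between Case~1 and Case~2), the Case~2 coefficients evaluate to $(2\alpha - \alpha)/\alpha = 1$ and $(\alpha - \alpha)/\alpha = 0$, so the Case~2 formula reduces exactly to $(x_\eps(s_\eps), y_\eps(s_\eps))$, matching Case~1. On the level set $s_\eps = \sigma + 2\alpha$ (boundary between Case~2 and Case~3), the Case~2 coefficients become $0$ and $1$, so $W_\eps$ reduces to $w_r^*(\sigma + 2\alpha)$, matching Case~3. The analogous verifications at $s_\eps = \sigma - \alpha$ and $s_\eps = \sigma - 2\alpha$ are identical with the appropriate sign changes (keeping in mind the factor $\sgn A$ enters only into the definition of $Q_\pm$, not into the case analysis of $W_\eps$).

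Finally, the closed sets $\{s_\eps \le \sigma - 2\alpha\}$, $\{|s_\eps - \sigma| \in [\alpha, 2\alpha]\}$ cut out by continuous conditions on $s_\eps$ cover $\Pi(\alpha)$, and $W_\eps$ is continuous on each one. Since adjacent pieces agree on their common boundary, the pasting lemma yields continuity of $W_\eps$ on all of $\Pi(\alpha)$. The only nontrivial ingredient is the continuity of $s_\eps$ itself, which has already been handled in Proposition \ref{p:s-cont}; everything else in the present lemma is bookkeeping of the piecewise definition, so I do not anticipate a substantive obstacle beyond carefully writing out the boundary matchings above.
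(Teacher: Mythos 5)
Your proposal is correct and takes essentially the same approach as the paper, which dispatches the lemma in a single sentence by pointing to the definition of $W_\eps$ together with Proposition \ref{p:s-cont}. You have simply spelled out the routine bookkeeping (joint continuity of solutions in initial data, coefficient matching at $\abs{s_\eps - \sigma} = \alpha$ and $\abs{s_\eps - \sigma} = 2\alpha$, pasting) that the paper leaves implicit.
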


\begin{proof}
Follows from the definition of $W_\eps$, see section \ref{s:main},
and from Proposition \ref{p:s-cont}.
\end{proof}

\begin{lemma} \label{l:fixed-point}
Let $(\hat x, \hat y) \in \Pi(\alpha)$ be a fixed point of the operator
$W_\eps(x_0, y_0)$. Then the solution $w_\eps(t; \hat x, \hat y)$
is periodic.
\end{lemma}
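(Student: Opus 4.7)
The plan is to show that any fixed point of $W_\eps$ inside $\Pi(\alpha)$ must come from Case~1 of the definition of $W_\eps$, and then to deduce periodicity from the Case~1 formula by exploiting the autonomy of \eqref{e:main}. Ruling out Cases~2--4 amounts to showing that their images lie outside $\Pi(\alpha)$.

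For Cases~3 and~4, the image of $W_\eps$ is the constant $w^*_r(\sigma\pm 2\alpha)$. By autonomy of \eqref{e:repul}, the repulsive flow starting at $w^*_r(\sigma\pm 2\alpha)$ reaches $w^*_r(\sigma)=(x^*,y^*)\in\Gamma_a$ after time $\mp 2\alpha$, so the defining relation for $v$ gives $v(w^*_r(\sigma\pm 2\alpha))=\mp 2\alpha$, whose magnitude $2\alpha$ exceeds $\alpha/2$; such points lie outside $\Pi(\alpha)$ and cannot be fixed.

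Case~2 needs more work. If $\abs{s_\eps-\sigma}\in[\alpha,2\alpha)$ and $\alpha$ is small relative to $T_r-\sigma$, then $s_\eps<T_r$, so the defining minimum in $s_\eps$ is attained and $z_\eps(s_\eps)=0$ by continuity. Proposition~\ref{p:repul} places $(x_\eps(s_\eps),y_\eps(s_\eps))$ within $\eps$-small distance of $w^*_r(s_\eps-\bar t)$, and Propositions~\ref{p:hat-t}--\ref{p:bar-t} together with the fact that $\tilde t$ is $O(\alpha)$-small on $\Pi(\alpha)$ bound $\abs{\bar t}$ by $O(\alpha+\rho)$. Hence both points entering the convex combination defining $W_\eps$ sit within $O(\alpha+\rho)$ plus an $\eps$-small error of $w^*_r(s_\eps)$. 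Since $v(w^*_r(s_\eps))=\sigma-s_\eps$ has magnitude at least $\alpha$, choosing $\rho\ll\alpha$ and $\eps$ small enough yields $\abs{v(W_\eps(\hat x,\hat y))}>\alpha/2$, again contradicting $(\hat x,\hat y)\in\Pi(\alpha)$.

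In Case~1 the argument closes easily: $s_\eps<\sigma+\alpha<T_r$, so $z_\eps(s_\eps,\hat x,\hat y)=0$ by the definition of $s_\eps$ and continuity, and the fixed-point equation $W_\eps(\hat x,\hat y)=(\hat x,\hat y)$ reads $w_\eps(s_\eps;\hat x,\hat y)=(\hat x,\hat y,0)=w_\eps(\tau;\hat x,\hat y)$. By autonomy of \eqref{e:main} and uniqueness of the initial value problem, $w_\eps(t;\hat x,\hat y)$ is periodic of period $s_\eps-\tau$. The main obstacle is the bookkeeping in Case~2, where several independent small errors coming from Propositions~\ref{p:repul}, \ref{p:hat-t}, and~\ref{p:bar-t} must be summed and compared to the $\alpha$-scale of $\Pi(\alpha)$; this is only a matter of fixing $\rho\ll\alpha$ and keeping $\alpha$ small relative to the fixed geometric data $T_r-\sigma$ and the transversality constant $A$.
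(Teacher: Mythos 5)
Your overall strategy coincides with the paper's: rule out Cases 2--4, then in Case 1 use autonomy plus uniqueness to convert the fixed-point equation $w_\eps(s_\eps;\hat x,\hat y)=(\hat x,\hat y,0)=w_\eps(\tau;\hat x,\hat y)$ into periodicity. Cases 3--4 and Case 1 are handled correctly and exactly as in the paper. The problem is in your Case 2 estimate.

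You bound $\tilde t$, and hence $\bar t$, by ``$O(\alpha+\rho)$'' using only that $\tilde t$ is small on $\Pi(\alpha)$, and then conclude $\abs{v(W_\eps)}>\alpha/2$, contradicting $(\hat x,\hat y)\in\Pi(\alpha)$. This does not close, for two reasons. First, the implicit constant in your $O(\alpha)$ bound on $\tilde t$ is not controlled; since the error from $\bar t$ feeds linearly into the $v$-coordinate of $(\bar x,\bar y)$ via $v(\bar x,\bar y)\approx(\sigma-s_\eps)+\bar t$, an $O(\alpha)$ constant of order $1$ or larger wipes out the $\alpha$ lower bound on $\abs{\sigma-s_\eps}$ entirely. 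The paper avoids this by extracting the precise relation $s_\eps-\sigma< u(\hat x,\hat y)-v(\bar x,\bar y)+\rho$, which rests on the cancellation $\tilde t\approx u(\hat x,\hat y)$ up to an $\eps$-small error; that cancellation in turn is available precisely because Proposition~\ref{p:u-v-small} makes $v(\hat x,\hat y)$ $\eps$-small (a fact you cite but use only to bound $v(\hat x,\hat y)$, not $\tilde t$). With $\abs{u(\hat x,\hat y)}\le\alpha/2$ and $\rho<\alpha/4$ this gives $v(\bar x,\bar y)<-\alpha/4$. Second, even with the sharp bound, the best you get is $\abs{v(W_\eps)}>\alpha/4$ (and indeed $\abs{v(\bar x,\bar y)}$ can be as small as roughly $\alpha/2-\rho/2$, so your claimed threshold $\alpha/2$ is simply not attainable near the boundary with Case~1). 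The contradiction must therefore be with the $\eps$-smallness of $v(\hat x,\hat y)$ from Proposition~\ref{p:u-v-small}, not with the $\alpha/2$ defining $\Pi(\alpha)$ --- exactly as the paper does.
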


\begin{proof}
If suffices to show that if $(\hat x, \hat y)$ is a fixed point, then Case 1
from the definition of $W_\eps$ holds, so that the value of $W_\eps$
corresponds to a point on the trajectory $w_\eps(t; \hat x, \hat y)$ and is
not adjusted, as in Cases 2--4. Note that in Cases 3 and 4, $W_\eps(\hat x,
\hat y) \not\in \Pi$, thus they cannot hold. It remains to prove that Case 2
also cannot hold.

Suppose that Case 2 holds, and let, for example, $s_\eps(\hat x, \hat y) \ge
\sigma + \alpha$. Denote $(\bar x, \bar y) = w_\eps(s_\eps, \hat x, \hat y)$.
According to Proposition \ref{p:u-v-small}, both $v(\hat x, \hat y)$
and $u(\bar x, \bar y)$ are $\eps$-small. Moreover, according to
Propositions \ref{p:hat-t} and \ref{p:bar-t},
\[
\alpha \le s_\eps(\hat x, \hat y) - \sigma < u(\hat x, \hat y) - v(\bar x,
\bar y) + \rho.
\]
Remember that $\abs{u(\hat x, \hat y)} \le \alpha / 2$, and
$\rho < \alpha / 4$. Thus,
\[
v(\bar x, \bar y) < - \alpha / 4.
\]
Note that the linear combination in the definition of $W_\eps$ moves
the point $(\bar x, \bar y)$ in the direction of the point
$(0, - 2 \alpha)$ in $(u, v)$-coordinates, thus further
decreasing $v(\bar x, \bar y)$. Therefore, we obtain
$v(\hat x, \hat y) < -\alpha / 4$, which contradicts Proposition
\ref{p:u-v-small}. This proves that only Case 1 can hold for
$(\hat x, \hat y)$.
\end{proof}

\begin{lemma} \label{l:rotation}
For sufficiently small $\eps$ the rotation $\gamma(\id - W_\eps, \Pi(\alpha))$
of the vector field $p - W_\eps(p)$ at the boundary of the set $\Pi(\alpha)$
is defined by
\[
\gamma(\id - W_\eps, \Pi(\alpha)) = \sgn(A).
\]
\end{lemma}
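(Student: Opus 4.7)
The plan is to apply the homotopy invariance of rotation: I would deform $W_\eps$ to an explicit affine model on which the rotation can be read off directly, verifying along the way that the deformed operator has no fixed points on $\partial\Pi(\alpha)$. It is convenient to work in the $(u,v)$-coordinates introduced before \eqref{e:par}, in which $\Pi(\alpha)$ is the square $\{|u|,|v|<\alpha/2\}$. Since the fixed-point index is invariant under smooth conjugation (via $I-D\tilde W_\eps=DF\,(I-DW_\eps)\,DF^{-1}$, where $F$ is the change of coordinates), the rotation computed in $(u,v)$-coordinates equals $\gamma(\id-W_\eps,\Pi(\alpha))$.

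The first step, which I expect to be the main technical obstacle, is to describe $W_\eps$ on $\partial\Pi(\alpha)$ precisely enough to rule out boundary fixed points. Statement~\ref{s:2} tells us that $Q_-$ consists of destabilizing points and $Q_+$ of stabilizing ones, so Proposition~\ref{p:s-cor} gives $s_\eps=T_r$ on $Q_-$ and $s_\eps=\rho$ on $Q_+$; hence Cases~3 and~4 of the definition of $W_\eps$ apply, and $W_\eps$ is identically $w_r^*(\sigma+2\alpha)$ on $Q_-$ and identically $w_r^*(\sigma-2\alpha)$ on $Q_+$. In $(u,v)$-coordinates these two constants are $(0,-2\alpha)$ and $(0,+2\alpha)$ respectively, both outside $\Pi(\alpha)$, so no fixed points lie on $Q_\pm$. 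On the remaining two sides $u=\pm\alpha/2$ I would use Proposition~\ref{p:u-v-small}: in Case~1 it asserts that $u(W_\eps)$ is $\eps$-small, while in Cases~2--4 the image is either the point $w_r^*(\sigma\pm2\alpha)$ (with $u=0$) or a convex combination of such a point with an image for which Proposition~\ref{p:u-v-small} still yields $\eps$-small $u$; either way $|u(W_\eps)|<\alpha/2$ for small $\eps$, ruling out fixed points on these sides.

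Finally, I would introduce the reference operator
\[
\hat W(u,v)=(0,\,-4v\sgn A),
\]
designed so that it agrees with $W_\eps$ on $Q_\pm$, and consider the straight-line homotopy $W^t=(1-t)W_\eps+t\hat W$ for $t\in[0,1]$, formed in $(u,v)$-coordinates. On $Q_\pm$ the two endpoints coincide, so $W^t$ is the same constant point outside $\Pi(\alpha)$ for every $t$; on the sides $u=\pm\alpha/2$ the $u$-components of both $W_\eps$ and $\hat W$ are $\eps$-small, so the $u$-component of $W^t$ is again $\eps$-small and differs from $\pm\alpha/2$. Hence $\id-W^t$ has no zeros on $\partial\Pi(\alpha)$ throughout the homotopy, and by homotopy invariance
\[
\gamma(\id-W_\eps,\Pi(\alpha))=\gamma(\id-\hat W,\Pi(\alpha)).
\]
The linear field $(u,v)-\hat W(u,v)=(u,\,(1+4\sgn A)v)$ has only the zero $(0,0)$, with Jacobian determinant $1+4\sgn A$, equal to $5$ when $A>0$ and $-3$ when $A<0$; hence the index, and therefore the rotation, is $\sgn A$ in both cases.
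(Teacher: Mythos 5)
Your argument follows essentially the same route as the paper's: describe $W_\eps$ on $\partial\Pi(\alpha)$ in the $(u,v)$-coordinates, observe that it is constant (and equal to $w_r^*(\sigma\pm 2\alpha)$) on $Q_\mp$ while the $u$-component of the image is $\eps$-small on the remaining sides, approximate $W_\eps$ by a linear map, and compute the rotation of the linearization. The paper's write-up is terser and says only that $\id - W_\eps$ is ``close to (and therefore co-directed with)'' the linear field $(u,5v)$ (having fixed $A>0$ at the outset); you make the underlying homotopy-invariance argument explicit, verify that no zeros occur on $\partial\Pi(\alpha)$ throughout the deformation, and carry through both signs of $A$, which is a tidier presentation of the same idea.

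One small point worth flagging: the paper's displayed equations \eqref{e:f1}--\eqref{e:f3} in this proof assign $u(W_\eps)=\pm 2\alpha$ on $Q_\mp$ and $v(W_\eps)\to 0$, whereas a direct computation from the definitions of $u$ and $v$ gives, as you found, $u(w_r^*(\sigma\pm 2\alpha))=0$ and $v(w_r^*(\sigma\pm 2\alpha))=\mp 2\alpha$; the roles of $u$ and $v$ appear to be transposed in the paper's equations, though the subsequent identification of the linear model $L_1(u,v)=(0,-4v)$ and the conclusion $\gamma=\sgn A$ agree with yours. Also, where you invoke Proposition~\ref{p:u-v-small} to bound $u(W_\eps)$ on the sides $u=\pm\alpha/2$, note that its hypothesis $z_\eps(s_\eps)=0$ holds automatically in Cases~1 and~2 (since $s_\eps<T_r$ there), while in Cases~3 and~4 the image is the constant $w_r^*(\sigma\pm 2\alpha)$ with $u=0$; your phrasing conflates these slightly, but the conclusion that $\abs{u(W_\eps)}<\alpha/2$ on those sides for small $\eps$ is correct in all cases.
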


\begin{proof}
Let for example,
\[
A > 0.
\]
Consider $Q_-$ and $Q_+$, the upper and lower sides of
the parallelogram $\Pi(\alpha)$. Then, by definition,
\begin{alignat*}{2}
W_\eps(x, y) &= w^*_r(\sigma + 2\alpha), &\quad (x, y) &\in Q_{-}, \quad \text{and} \\
W_\eps(x, y) &= w^*_r(\sigma - 2\alpha), &\quad (x, y) &\in Q_{+}.
\end{alignat*}
In other words,
\begin{alignat}{2}
u(W_\eps(x, y)) &= 2\alpha,  &\quad (x, y) &\in Q_{-}, \label{e:f1} \\
u(W_\eps(x, y)) &= -2\alpha, &\quad (x, y) &\in Q_{+}. \label{e:f2}
\end{alignat}
Also, according to Proposition \ref{p:bar-t},
\begin{equation} \label{e:f3}
\lim_{\eps \to 0} v(W_\eps(x, y)) = 0.
\end{equation}
The relationships \eqref{e:f1}--\eqref{e:f3} imply that in the coordinates
$(u, v)$ the mapping $W_\eps$ on the boundary of $\Pi$ is close to the linear
mapping $L_1(u, v) = (0, -4v)$, thus the mapping $\id - W_\eps$ is close to
(and therefore co-directed with) the linear mapping $L_2(u, v) = (u, 5v)$, and
the result follows from the properties of the rotation number.
\end{proof}

Lemma \ref{l:rotation} implies that the operator $W_\eps(x_0, y_0)$ has a
fixed point $(\hat x, \hat y)$ on the set $\Pi(\alpha)$, which defines a periodic
solution of \eqref{e:main} according to Lemma \ref{l:fixed-point}.
Moreover, according to Propositions \ref{p:s-cor} and \ref{p:bar-t},
$v(\hat x, \hat y)$ and $u(\hat x, \hat y)$ are both $\eps$-small,
which implies both that $(\hat x, \hat y)$ is $\eps$-close to $(x^*, y^*)$,
and that the minimal period of the solution starting from $(\hat x, \hat y)$
is $\eps$-close to $\sigma - \tau$. Thus, Theorem \ref{t:main} is proved.

\section{Example}
Consider the system
\begin{equation*}
{\arraycolsep=0pt \begin{array}{rl}
\dot x &{} = - a y + z / a, \\[3pt]
\dot y &{} = x + 1, \\[3pt]
\eps \dot z &{} = x + \abs{z}.
\end{array} }
\end{equation*}
In this case \eqref{e:attr} takes the form
\begin{equation*}
{\arraycolsep=0pt \begin{array}{rl}
\dot x &{} = - a y + x / a, \\[3pt]
\dot y &{} = x + 1,
\end{array} }
\end{equation*}
and \eqref{e:repul} turns into
\begin{equation*}
{\arraycolsep=0pt \begin{array}{rl}
\dot x &{} = - a y - x / a, \\[3pt]
\dot y &{} = x + 1.
\end{array} }
\end{equation*}

For $a > a_0 \approx 1.9$ the corresponding curves $w_a(t)$ and $w_r(t)$
intersect transversally on the plane $(x, y)$, see Fig.\ \ref{fig:sample-2d}.

\begin{figure}[htb]
\begin{center}
\begin{tabular}{cc}
\includegraphics*[width=5cm]{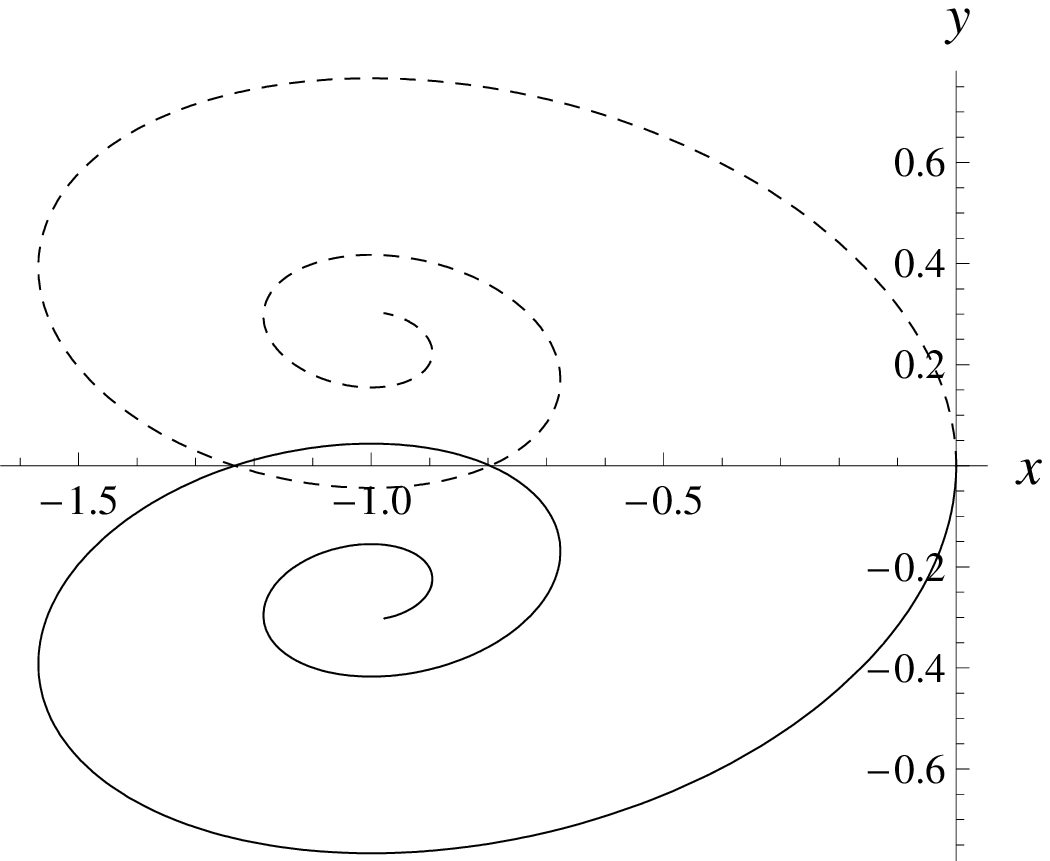} &\quad
\includegraphics*[width=5cm]{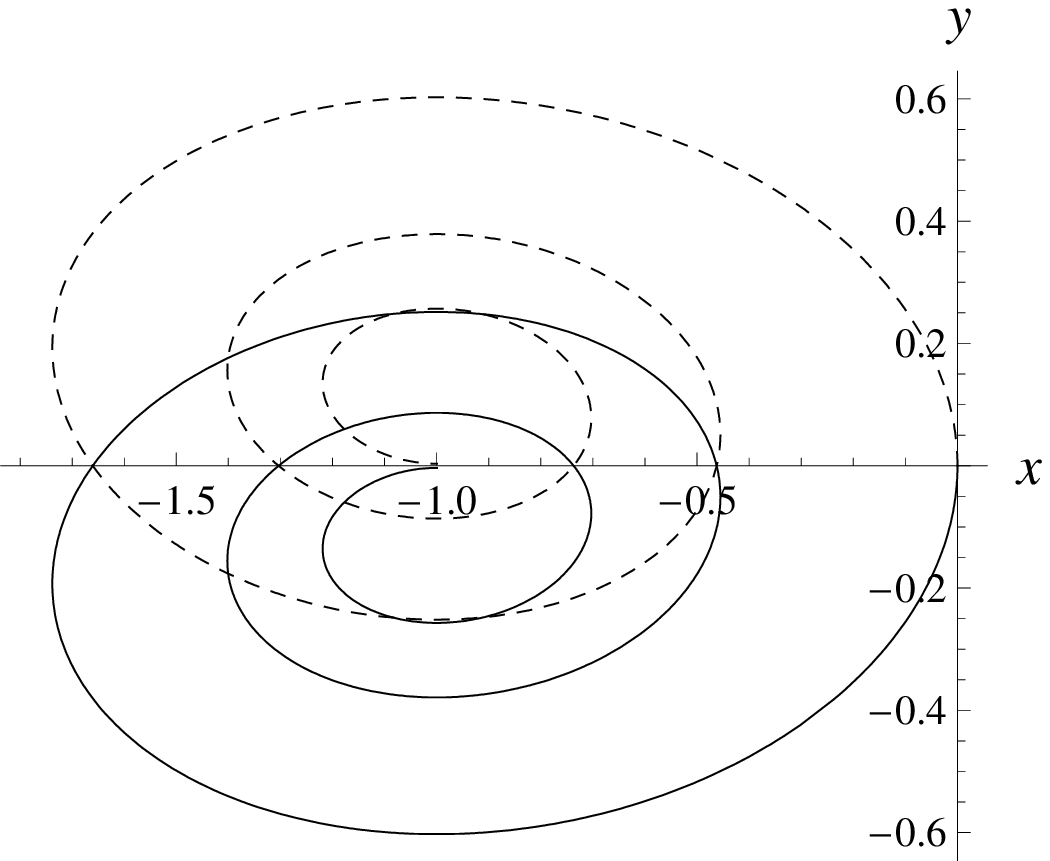}
\end{tabular}
\end{center}
\caption{Trajectories $w_a(t)$ (solid) and $w_r(t)$ (dashed); left: $a = 2$, right: $a = 3$.}
\label{fig:sample-2d}
\end{figure}

For any $a > a_0$ this system has a periodic canard. The last figure
graphs the numerical approximation of such trajectory, together
with the limiting curve, which consists of $\Gamma_a$, $\Gamma_r$,
and a vertical segment connecting them.

\begin{figure}[htb]
\begin{center}
\includegraphics*[width=5.5cm]{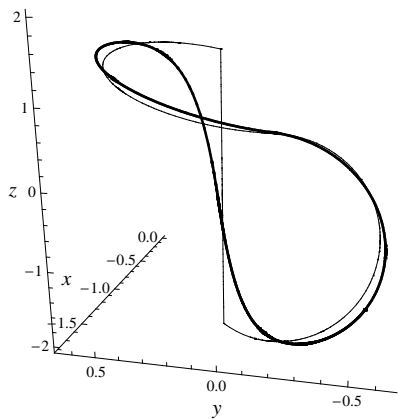}
\end{center}
\caption{Periodic canard for $a = 3$ with $\eps = 0.1$ (thick) and the limiting curve.}
\label{fig:sample-canard}
\end{figure}

\end{document}